\providecommand{\cal}[1]{\mathcal{#1}}
\newcommand{\gen}{\mathbf{A}}
\newcommand{\B}{{\mathbb{B}}}
\newcommand{\C}{{\mathbb{C}}}
\newcommand{\N}{{\mathbb{N}}}
\newcommand{\R}{{\mathbb{R}}}
\newcommand{\Rn}{{\mathbb{R}}^{n}}
\newcommand{\dv}{\operatorname{div}}
\newcommand{\grad}{\operatorname{grad}}
\newcommand{\im}{\operatorname{i}}
\newcommand{\lap}{\operatorname{\Delta}}
\newcommand{\mlap}{-\!\operatorname{\Delta}}
\newcommand{\scal}[2]{(\,#1\,|\, #2\,)}
\newcommand{\set}[2]{\{\,#1\bigm| #2\,\}}
\newcommand{\Set}[2]{\bigl\{\,#1\bigm| #2\,\bigr\}}
\newcommand{\vvvert}{{|\hspace{-1.6pt}|\hspace{-1.6pt}|}}
\newcommand{\dual}[2]{\ensuremath{\langle{#1},{#2}\rangle}} 
\newcommand{\Dual}[2]{\ensuremath{\big\langle{#1},{#2}\big\rangle}}
\newtheorem{theorem}{Theorem}
\newtheorem{proposition}{Proposition}
\newtheorem{lemma}{Lemma}
\theoremstyle{definition}
\theoremstyle{remark}
\newtheorem{remark}{Remark}
\begin{document}
\title[Well-Posed Final Value Problems for Coercive Operators]{Well-Posed Final Value Problems and Duhamel's Formula for Coercive Lax--Milgram Operators}

\author{Jon Johnsen}
\address{Department of Mathematics, Aalborg University, Skjernvej 7A, DK-9220 Denmark}
\email{jjohnsen@math.aau.dk}
\subjclass[2010]{35A01, 47D06}
\thanks{~
\\[9\jot]{\tt Appeared in Electronic Research Archives, vol.\ 27 (2019), 20--36; doi:10.3934/era2019008}}

\begin{abstract}This paper treats parabolic final
  value problems generated by coercive Lax--Milgram operators, and well-posedness is proved for this large class.
  The result is obtained by means of an isomorphism between Hilbert spaces containing the data and solutions.  
  Like for elliptic generators, the data space is the graph normed domain of an unbounded operator that maps final states
  to the corresponding initial states, and the resulting compatibility condition extends to the coercive context.
  Lax--Milgram operators in vector distribution spaces is the main framework, but the crucial
  tool that analytic semigroups always are invertible in the class of closed operators is extended
  to unbounded semigroups, and this is shown to yield a Duhamel formula for the Cauchy problems in
  the set-up.  
  The final value heat conduction problem with the homogeneous Neumann boundary condition on a smooth open set is
  also proved to be well posed in the sense of Hadamard.
\end{abstract}

\keywords{Duhamel formula, coercive operator, final value data, Neumann heat problem, well-posed}

\maketitle

\section{Introduction}
\thispagestyle{empty}

Well-posedness of final value problems
for a large class of parabolic differential equations was recently obtained in a joint work of the author
and given an ample description for a broad audience in \cite{ChJo18ax},
after the announcement in \cite{ChJo18}. The present paper substantiates the indications made in
the concise review \cite{JJ19}, namely, that  the abstract parts in 
\cite{ChJo18ax} extend from $V$-elliptic Lax--Milgram operators $A$ to those that are merely
$V$-\emph{coercive}---despite  that such $A$ may be non-injective.

As an application, the final value heat conduction problem with the homogeneous Neumann condition is
shown to be well-posed.

\bigskip 

The basic analysis is made for a
Lax--Milgram operator $A$ defined in $H$ from a $V$-coercive sesquilinear form $a(\cdot,\cdot)$ 
in a Gelfand triple, i.e.,
three separable, densely injected Hilbert spaces $V\hookrightarrow H\hookrightarrow V^*$ 
having the norms $\|\cdot\|$, $|\cdot|$ and $\|\cdot\|_*$, respectively. Hereby $V$ is the form
domain of $a$; and $V^*$ the antidual  of $V$.  
Specifically there are constants $C_j>0$ and $k\in\R$ 
such that all $u, v\in V$ satisfy $\| v\|_*\le C_1|v|\le C_2 \| v\|$ and 
\begin{equation} \label{coerciv-id}
  |a(u,v)|\le C_3\|u\|\,\|v\|\,, \qquad \Re a(v,v)\ge C_4\|v\|^2-k|u|^2. 
\end{equation}
In fact, $D(A)$ consists of those $u\in V$ for
which $a(u,v)=\scal{f}{v}$ for some $f\in H$ holds for all $v\in V$, and $Au=f$;
hereby $\scal{u}{v}$ denotes the inner product in $H$.
There is also an extension $A\in\B(V,V^*)$ given by $\dual{Au}{v}=a(u,v)$ for
$u,v\in V$. This is uniquely determined as $D(A)$ is dense in $V$.

Both $a$ and $A$ are referred to as $V$-elliptic if the above holds for $k=0$; then 
$A\in\B(V,V^*)$ is a bijection.
One may consult the book of Grubb \cite{G09} or that of Helffer \cite{Hel13}, or \cite{ChJo18ax}, for more
details on the set-up and basic properties of the unbounded, but closed operator $A$ in $H$.
Especially $A$ is self-adjoint in $H$  if and only if $a(v,w)=\overline{a(w,v)}$, which is not assumed;
$A$ may also be nonnormal in general.

In the framework of such a triple $(H,V,a)$, the general final value problem is the following: \emph{for given data
$f\in L_2(0,T; V^*)$ and $u_T\in H$,
determine the  $u\in{\cal D}'(0,T;V)$ such that}
\begin{equation}
  \label{fvA-intro}
  \left.
  \begin{aligned}
  \partial_tu +Au &=f  &&\quad \text{in ${\cal D}'(0,T;V^*)$}
\\
  u(T)&=u_T &&\quad\text{in $H$}
\end{aligned}
\right\}
\end{equation}
By definition of Schwartz' vector distribution space ${\cal D}'(0,T;V^*)$ as the space of continuous linear maps 
$C_0^\infty(]0,T[)\to V^*$, cf.\ \cite{Swz66}, the above equation means that 
for every scalar test function $\varphi\in C_0^\infty(]0,T[)$
the identity $\dual{u}{-\varphi'}+\dual{A u}{\varphi}=\dual{f}{\varphi}$ holds in $V^*$.

As is well known, a wealth of parabolic Cauchy problems with homogeneous boundary 
conditions have been treated via triples $(H,V,a)$ and the ${\cal D}'(0,T;V^*)$
set-up in \eqref{fvA-intro}; cf.\ the
work of Lions and Magenes~\cite{LiMa72}, Tanabe~\cite{Tan79}, Temam~\cite{Tem84}, Amann
\cite{Ama95} etc. 

The theoretical analysis made in \cite{ChJo18,ChJo18ax, JJ19} shows that, in the $V$-elliptic case,
the problem in \eqref{fvA-intro} is well posed, i.e., it has
\emph{existence, uniqueness} and \emph{stability} of a solution $u\in X$ for given data
$(f,u_T)\in Y$, in certain Hilbertable spaces $X$, $Y$ that were described explicitly. 
Hereby the data space $Y$ is defined in terms of a particular compatibility condition, which was
introduced for the purpose in \cite{ChJo18,ChJo18ax}. 
More precisely, there is even a linear homeomorphism $X\longleftrightarrow Y$, which yields
well-posedness in a strong form.

This has seemingly closed a gap in the theory, which had remained since the 1950's,
even though the well-posedness is decisive for the interpretation and accuracy of 
numerical schemes for the problem (the work of John~\cite{John55} was pioneering,
but also Eld\'en \cite{Eld87} could be mentioned).
In rough terms, the results are derived from a useful structure on the reachable set for a
general class of parabolic differential equations.

The main example treated in \cite{ChJo18,ChJo18ax} is the heat conduction problem of
characterising the 
$u(t,x)$ that in a $C^\infty$-smooth bounded open set 
$\Omega\subset\Rn$ with boundary $\Gamma=\partial\Omega$
fulfil the equations (for $\Delta=\partial_{x_1}^2+\dots+\partial_{x_n}^2$),
\begin{equation}  \label{heat-intro}
\left.
\begin{aligned}
  \partial_tu(t,x)-\lap u(t,x)&=f(t,x) &&\quad\text{for $t\in\,]0,T[\,$,  $x\in\Omega$}
\\
   u(t,x)&=g(t,x) &&\quad\text{for $t\in\,]0,T[\,$, $x\in\Gamma$}
\\
  u(T,x)&=u_T(x) &&\quad\text{for $x\in\Omega$}
\end{aligned}
\right\}
\end{equation}
An area of interest of this could be a nuclear power plant 
hit by a power failure at $t=0$: after power is regained at $t=T>0$, and 
the reactor temperatures $u_T(x)$ are measured, a
calculation backwards in time could possibly settle whether at some $t_0<T$ the temperatures $u(t_0,x)$
could cause damage to the fuel rods. 

However, the Dirichlet condition $u=g$ at the boundary
$\Gamma$ is of limited physical importance, so an extension to, e.g., the Neumann
condition, which represents controlled heat flux at $\Gamma$,
makes it natural to work out an extension to $V$-coercive Lax--Milgram operators $A$. 

In this connection it should be noted that when $A$ is $V$-coercive (that is, satisfies \eqref{coerciv-id}
only for some $k>0$), it is possible that $0\in\sigma(A)$, the spectrum of $A$, for example because
$\lambda=0$ is an eigenvalue of $A$. In fact, this is the case for the Neumann realisation
$\mlap_N$, which has the space of constant functions 
$\C1_\Omega$ as the null space.
Well-posedness is obtained for the heat problem \eqref{heat-intro} with a replacement of the Dirichlet condition
by the homogeneous Neumann condition in Section~\ref{Neu-sect} below.

\bigskip
At first glance, it may seem surprising that the possible non-injectivity of the coercive operator
$A$ is \emph{inconsequential} for the well-posedness of the final value problem \eqref{fvA-intro}.
In particular this means that the backward uniqueness---$u(T)=0$ in
$H$ implies $u(t)=0$ in $H$ for $0\le t<T$---of the equation $u'+Au=f$ will hold
regardless of whether $A$ is injective or not. This can be seen from the extensions of the abstract
theory given below; in particular when the results are applied in Section~\ref{Neu-sect} to the case $A=\mlap_N$.

The point of departure is to make a comparison of \eqref{fvA-intro} with the corresponding Cauchy
problem for the equation $u'+Au=f$.
For this it is classical to seek solutions $u$ in the Banach space 
\begin{equation}
  \begin{split}
  X=&L_2(0,T;V)\bigcap C([0,T];H) \bigcap H^1(0,T;V^*),
\\
  \|u\|_X=&\big(\int_0^T \|u(t)\|^2\,dt+\sup_{0\le t\le T}|u(t)|^2+\int_0^T (\|u(t)\|_*^2   +\|u'(t)\|_*^2)\,dt\big)^{1/2}.   
  \end{split}
  \label{eq:X}
\end{equation}
In fact, the following result is essentially known from the work of Lions and Magenes \cite{LiMa72}:
 
\begin{proposition}  \label{LiMa-prop}
Let $V$ be a separable Hilbert space with $V \subseteq H$ algebraically, topologically and densely,
and let $A$ denote the Lax--Milgram operator induced by a $V$-coercive, bounded 
sesquilinear form on $V$, as well as its extension $A\in\B(V,V^*)$.
When $u_0 \in H$ and $f \in L_2(0,T; V^*)$ are given, then there is 
a uniquely determined solution $u$ belonging to $X$, cf.\ \eqref{eq:X},
of the Cauchy problem 
\begin{equation}
  \left.
  \begin{aligned}
  \partial_tu +Au &=f  \quad \text{in ${\cal D}'(0,T;V^*)$}
\\
  \qquad u(0)&=u_0 \quad\text{in $H$}
  \end{aligned}
  \right\}
  \label{ivA-intro}
\end{equation}
The solution operator $(f,u_0)\mapsto u$ is continuous $L_2(0,T; V^*)\oplus H\to X$, 
and problem \eqref{ivA-intro} is well-posed.
\end{proposition}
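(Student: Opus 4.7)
The plan is to reduce the $V$-coercive situation to the $V$-elliptic one by an exponential rescaling and then invoke the classical Lions--Magenes existence theorem. The key observation from \eqref{coerciv-id} is that the shifted form $a_k(u,v):=a(u,v)+k\scal{u}{v}$ is strictly $V$-elliptic, with induced operator $A+kI\in\B(V,V^*)$. I would therefore substitute $u(t)=e^{kt}v(t)$, which turns \eqref{ivA-intro} into
\begin{equation*}
 v'+(A+kI)v=e^{-kt}f,\qquad v(0)=u_0.
\end{equation*}
Since multiplication by $e^{\pm kt}$ is a bicontinuous automorphism of each of $L_2(0,T;V)$, $C([0,T];H)$, $H^1(0,T;V^*)$ (hence of $X$) and of $L_2(0,T;V^*)\oplus H$, it suffices to settle the $V$-elliptic case.

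For the elliptic case I would proceed by the standard Faedo--Galerkin method, using a Hilbert basis of $V$ that is orthonormal in $H$ to obtain finite-dimensional ODE approximants $v_m$. Testing the approximate equation with $v_m(t)$ itself and exploiting the $V$-ellipticity of $a_k$ yields the energy identity, which in turn gives the a priori bound
\begin{equation*}
 |v_m(t)|^2+\int_0^t \|v_m(s)\|^2\,ds \le C\Bigl(|u_0|^2+\int_0^T\|f(s)\|_*^2\,ds\Bigr).
\end{equation*}
Hence $(v_m)$ stays bounded in $L_2(0,T;V)\cap L_\infty(0,T;H)$ and, via the equation, $(v_m')$ in $L_2(0,T;V^*)$. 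Extracting weak limits yields a solution $v\in L_2(0,T;V)\cap H^1(0,T;V^*)$; the classical Lions embedding lemma upgrades $v$ to an element of $C([0,T];H)$ and lets one read off $v(0)=u_0$. Uniqueness and the continuous dependence estimate follow from the same energy identity applied to the difference of two solutions.

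Pulling back by $u=e^{kt}v$ then delivers the unique $u\in X$ solving \eqref{ivA-intro}, with $(f,u_0)\mapsto u$ continuous $L_2(0,T;V^*)\oplus H\to X$ as a composition of continuous maps. I do not anticipate a serious obstacle, since the result is a textbook adaptation of Lions--Magenes; the only point needing care is to verify that the identity $u'+Au=e^{kt}\bigl(v'+(A+kI)v\bigr)$ persists in ${\cal D}'(0,T;V^*)$, which follows from the chain rule for vector distributions combined with the embedding $X\hookrightarrow C([0,T];H)$ used to give meaning to the initial condition.
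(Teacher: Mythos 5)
Your proof is correct, and its core step---the substitution $u(t)=e^{kt}v(t)$ reducing the $V$-coercive problem to the $V$-elliptic one for $A+kI$, followed by the Lions--Magenes theory---is exactly the reduction the paper uses. The two minor divergences are these. First, where you re-derive the elliptic case by Faedo--Galerkin, the paper simply cites \cite[Sect.~3.4.4]{LiMa72}; since that reference's own proof is the Galerkin method, this is a matter of exposition rather than substance (your sketch is sound, the only point needing the usual care being the identification of $v(0)=u_0$ from the weak limits). Second, and more substantively, you obtain the continuity of $(f,u_0)\mapsto u$ abstractly, as a composition of bounded maps through the elliptic problem, whereas the paper proves it by a direct energy estimate on the coercive equation itself: testing with $u$, using $\partial_t|u|^2=2\Re\dual{\partial_t u}{u}$ and Gr\"onwall's lemma to get the explicit bound of Proposition~\ref{pest-prop} with constants expressed in $C_3$, $C_4$, $k$. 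Your route suffices for the proposition as stated; the paper's explicit estimate is what gets reused later to produce the quantitative inequality \eqref{eq:Y-intro} in Theorem~\ref{intro-thm}, so if you continued to the main theorem you would eventually need to make your composed bound explicit in much the same way.
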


Remarks on the classical reduction from the $V$-coercive case to the elliptic one will follow in
Section~\ref{aproof-sect}. 
The stated continuity of the solution operator is well known to the experts.
But for the reader's convenience,  in Proposition~\ref{pest-prop} below, the continuity is shown by explicit
estimates using Gr{\"o}nwall's lemma; these may be of independent interest.

Whilst the below expression for the solution hardly is surprising at all, 
it has seemingly not been obtained hitherto in the present context of $V$-coercive Lax--Milgram
operators $A$ and general triples $(H,V,a)$:
 
\begin{proposition}   \label{Duhamel-prop}
The unique solution $u$ in $X$ provided by Proposition~\ref{LiMa-prop} is given by Duhamel's
formula,
\begin{equation} \label{u-id}
  u(t) = e^{-tA}u_0 + \int_0^t e^{-(t-s)A}f(s) \,ds \qquad\text{for } 0\leq t\leq T.
\end{equation}
Here each of the three terms belongs to $X$.
\end{proposition}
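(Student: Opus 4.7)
\medskip

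\noindent\textbf{Plan.} My approach is to show that the right-hand side of \eqref{u-id},
\[
w(t):=e^{-tA}u_0+\int_0^t e^{-(t-s)A}f(s)\,ds,
\]
is a member of $X$ solving the Cauchy problem \eqref{ivA-intro}, whereupon uniqueness in Proposition~\ref{LiMa-prop} forces $w=u$. The first step is to set up the semigroup. By \eqref{coerciv-id} the shifted form $a(\cdot,\cdot)+k\scal{\cdot}{\cdot}$ is $V$-elliptic, so $A+kI$ is sectorial on $H$ and $-A$ generates an analytic semigroup $(e^{-tA})_{t\ge 0}$ with growth bound at most $k$. Since $A$ extends to a coercive element of $\B(V,V^*)$, this semigroup admits a consistent analytic extension on $V^*$ and restriction on $V$, giving meaning to the convolution for $f\in L_2(0,T;V^*)$. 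The rescaling $v=e^{kt}u$, $g=e^{kt}f$, $B=A+kI$ reduces matters to the $V$-elliptic case, where the semigroup is uniformly bounded; I would carry out the analysis there and then undo the rescaling.

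\noindent\textbf{Verification.} The free term $e^{-tA}u_0$ lies in $X$ by Proposition~\ref{LiMa-prop} applied with data $(0,u_0)$. For the Duhamel term I would first establish $w'+Aw=f$ in ${\cal D}'(0,T;V^*)$ with $w(0)=u_0$ by the classical differentiation argument: for smooth data $u_0\in D(A)$ and $f\in C^1([0,T];H)$, analytic semigroup theory gives $w\in C^1([0,T];H)\cap C([0,T];D(A))$ satisfying \eqref{ivA-intro} directly, via the product rule applied to $s\mapsto e^{-(t-s)A}w(s)$. The general case follows by approximating $(f,u_0)\in L_2(0,T;V^*)\oplus H$ by smooth data and passing to the limit, using continuity of $u_0\mapsto e^{-\cdot A}u_0$ into $C([0,T];H)$ and of $f\mapsto\int_0^{\cdot}e^{-(\cdot-s)A}f(s)\,ds$ into $C([0,T];V^*)$. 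Once $w$ solves \eqref{ivA-intro} distributionally and lies in $L_2(0,T;V)$, it belongs to $X$ and uniqueness gives $w=u$; the third term, $u-e^{-tA}u_0$, is then in $X$ as a difference of $X$-members.

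\noindent\textbf{Main obstacle.} The technically demanding point is showing that the Duhamel convolution $t\mapsto\int_0^t e^{-(t-s)A}f(s)\,ds$ lies in $L_2(0,T;V)$ whenever $f\in L_2(0,T;V^*)$. This is a maximal $L_2$-regularity assertion, which in the Hilbert-space setting is covered by de~Simon's theorem, but here it must be invoked for the generator $-A$ acting between $V$ and $V^*$ rather than on $D(A)\subset H$, a natural but nontrivial transfer. Once that regularity is in hand, the identification $w=u$ via Proposition~\ref{LiMa-prop} closes the argument, and the statement that each of the three terms in \eqref{u-id} belongs to $X$ is immediate.
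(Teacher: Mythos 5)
Your proposal runs in the opposite direction from the paper's proof, and the difference matters. You construct the candidate $w(t)=e^{-tA}u_0+\int_0^te^{-(t-s)A}f(s)\,ds$ and try to verify that it lies in $X$ and solves \eqref{ivA-intro}, which forces you to confront maximal $L_2$-regularity of the convolution (de~Simon/Lions) as your ``main obstacle''---a true but heavy result that you invoke rather than prove, and without which your identification $w=u$ via uniqueness in $X$ does not close. The paper instead starts from the solution $u\in X$ already furnished by Proposition~\ref{LiMa-prop}, applies $e^{-(T-t)A}$ as an integrating factor (justified by the Leibniz rule of Lemma~\ref{Leibniz-lem}, since $u\in H^1(0,T;V^*)$ and all terms of $u'+Au=f$ lie in $L_2(0,T;V^*)$), integrates by the vector-valued Fundamental Theorem to get $e^{-(T-t)A}u(t)=e^{-(T-t)A}\bigl(e^{-tA}u_0+\int_0^te^{-(t-s)A}f(s)\,ds\bigr)$, and then cancels the common factor using the \emph{injectivity} of the analytic semigroup (Proposition~\ref{inj-prop})---which is precisely the point the paper wants to showcase. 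In that direction the $X$-membership of the integral term costs nothing: it equals $u-e^{-tA}u_0$, a difference of two elements of $X$. So the regularity assertion you flag as the technical crux is exactly what the paper's orientation renders unnecessary. If you want to keep your forward direction, note that you could also avoid de~Simon by a cleaner density argument: for smooth data $w=u$ classically, and both sides depend continuously on $(f,u_0)$ (the solution $u$ in $X$ by Proposition~\ref{pest-prop}, the convolution in $C([0,T];V^*)$ by elementary semigroup bounds), so $w=u$ in general; but as written, your proof leaves a genuine step unestablished.
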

As shown in Section~\ref{aproof-sect} below, it suffices for \eqref{u-id} to reinforce the
classical integration factor technique by \emph{injectivity} of the semigroup $e^{-tA}$.

In fact, it is exploited in \eqref{u-id} and throughout that $-A$ generates an analytic semigroup $e^{-zA}$ in
$\B(H)$. 
As a consequence of the analyticity, the family of operators $e^{-zA}$
was shown in \cite{ChJo18ax} to consist of \emph{injections} on $H$ in case $A$ is $V$-elliptic.
This extends to general $V$-coercive $A$, as accounted for in Proposition~\ref{inj-prop} below.
Hence $e^{-tA}$ also in the present paper has the inverse $e^{tA}:=(e^{-tA})^{-1}$ for $t>0$.

For $t=T$, the Duhamel formula \eqref{u-id} now obviously yields a 
\emph{bijection} $u(0)\longleftrightarrow u(T)$ between the initial
and terminal states (for fixed $f$), as one can solve for $u_0$ by means of the inverse $e^{TA}$. 
In particular backwards uniqueness of the solutions to $u'+Au=f$ holds in the large class $X$.

\bigskip
Returning to the final value problem \eqref{fvA-intro} it would be natural to seek
solutions $u$ in the same space $X$. 
This turns out to be possible only when the data $(f, u_T)$ are subjected to substantial further conditions. 

To formulate these, it is noted that the above inverse $e^{tA}$ enters the theory through its
domain, which  in the algebraic sense simply is a range, namely $D(e^{tA})=R(e^{-tA})$; but this
domain has the structural advantage of being a Hilbert space under 
the graph norm $\|u\|=(|u|^2+|e^{tA}u|^2)^{1/2}$.

For $t=T$ the domains $D(e^{T A})$ have a decisive role in the well-posedness result below, where
condition \eqref{eq:cc-intro} is a fundamental clarification for the final value problem
in \eqref{fvA-intro} and the parabolic problems it represents.

Another ingredient in \eqref{eq:cc-intro} is the full yield $y_f$ of the source term
$f\colon \,]0,T[\, \to V^*$, namely
\begin{equation} \label{yf-eq}
  y_f= \int_0^T e^{-(T-t)A}f(t)\,dt.
\end{equation}
Hereby it is used that $e^{-tA}$ extends to an analytic semigroup in  $V^*$,
as the extension $A\in\B(V,V^*)$ is an unbounded operator in the Hilbertable space $V^*$ satisfying the necessary
estimates (cf.\ Remark~\ref{domain-rem}; and also \cite[Lem.\ 5]{ChJo18ax} for the extension).
So $y_f$ is a priori a vector in $V^*$, but in fact $y_f$ lies in $H$ 
as Proposition~\ref{Duhamel-prop} shows it equals 
the final state of a solution in $C([0,T],H)$ of a Cauchy problem having $u_0=0$. 

These remarks on $y_f$ make it clear that in the following main result of the paper---which
relaxes the assumption of $V$-ellipticity in \cite{ChJo18,ChJo18ax} to $V$-coercivity---the
difference in \eqref{eq:cc-intro} is a member of  $H$:

\begin{theorem} \label{intro-thm}
  Let $A$ be a $V$-\emph{coercive} Lax--Milgram operator defined from a triple $(H,V,a)$ as above.
  Then the abstract final value problem \eqref{fvA-intro} has a solution $u(t)$ belonging the space
  $X$ in \eqref{eq:X}, if and only if the data $(f,u_T)$ belong to the subspace 
  \begin{equation}
    Y\subset L_2(0,T; V^*)\oplus H
  \end{equation}
  defined  by the condition  
  \begin{equation}
    \label{eq:cc-intro}
    u_T-\int_0^T e^{-(T-t)A}f(t)\,dt \ \in\  D(e^{TA}).
  \end{equation}  
In the affirmative case, the solution $u$ is uniquely determined in $X$ and 
\begin{equation}
  \label{eq:Y-intro}
  \begin{split}
      \|u\|_{X}& \le
  c \Big(|u_T|^2+\int_0^T\|f(t)\|_*^2\,dt+\Big|e^{TA}\big(u_T-\int_0^Te^{-(T-t)A}f(t)\,dt\big)\Big|^2\Big)^{1/2}
\\
   &=c \|(f,u_T)\|_Y,
  \end{split}
\end{equation}
whence the solution operator $(f,u_T)\mapsto u$ is continuous $Y\to X$. Moreover,
\begin{equation} \label{eq:fvp_solution}
  u(t) = e^{-tA}e^{TA}\Big(u_T-\int_0^T e^{-(T-t)A}f(t)\,dt\Big) + \int_0^t e^{-(t-s)A}f(s) \,ds,
\end{equation}
where all terms belong to $X$ as functions of $t\in[0,T]$, and the difference in
\eqref{eq:cc-intro} equals $e^{-TA}u(0)$ in $H$.
\end{theorem}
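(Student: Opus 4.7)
The plan is to transport the final value problem to an initial value problem using the injectivity of $e^{-TA}$ furnished by Proposition~\ref{inj-prop}, and then to invoke the Cauchy theory of Propositions~\ref{LiMa-prop} and~\ref{Duhamel-prop}. For necessity, suppose $u\in X$ solves \eqref{fvA-intro}; then $u$ also satisfies the Cauchy problem with initial datum $u_0:=u(0)\in H$, so Duhamel's formula evaluated at $t=T$ gives
$$u_T=u(T)=e^{-TA}u(0)+y_f,$$
whence $u_T-y_f=e^{-TA}u(0)\in R(e^{-TA})=D(e^{TA})$. This establishes \eqref{eq:cc-intro} together with the concluding identification of the difference in \eqref{eq:cc-intro} with $e^{-TA}u(0)$.

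Uniqueness in $X$ would follow from the same computation applied to the difference $w:=u_1-u_2$ of two putative solutions: $w$ solves the homogeneous Cauchy problem with $w(T)=0$, so Duhamel gives $e^{-TA}w(0)=0$, and the injectivity in Proposition~\ref{inj-prop} forces $w(0)=0$; the uniqueness half of Proposition~\ref{LiMa-prop} then yields $w\equiv 0$.

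For sufficiency, given $(f,u_T)\in Y$, condition \eqref{eq:cc-intro} says exactly that $u_T-y_f$ lies in the range of $e^{-TA}$, so thanks again to injectivity there is a unique $u_0\in H$ with $e^{-TA}u_0=u_T-y_f$, namely $u_0:=e^{TA}(u_T-y_f)$. Proposition~\ref{LiMa-prop} produces the $X$-solution $u$ of the Cauchy problem with this initial datum, and Proposition~\ref{Duhamel-prop} represents it by Duhamel's formula; evaluating at $t=T$ gives $u(T)=e^{-TA}u_0+y_f=(u_T-y_f)+y_f=u_T$, so $u$ solves \eqref{fvA-intro}, and substituting $u_0$ into Duhamel yields \eqref{eq:fvp_solution}. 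The membership of each term in $X$ is read off by applying Proposition~\ref{LiMa-prop} once with $f\equiv 0$ and once with $u_0=0$.

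For the estimate \eqref{eq:Y-intro}, the continuity of the Cauchy solution operator $L_2(0,T;V^*)\oplus H\to X$ from Proposition~\ref{LiMa-prop} (in the quantitative form of Proposition~\ref{pest-prop}) gives a bound $\|u\|_X\le c'(|u_0|^2+\int_0^T\|f(t)\|_*^2\,dt)^{1/2}$; substituting $|u_0|=|e^{TA}(u_T-y_f)|$ and adjoining the nonnegative term $|u_T|^2$ produces \eqref{eq:Y-intro} and hence continuity $Y\to X$. The main subtle point throughout is that the unbounded inverse $e^{TA}$ must be handled through the graph-norm Hilbert space structure on $D(e^{TA})=R(e^{-TA})$: both the well-definedness of $u_0$ in the sufficiency step and the passage from $e^{-TA}w(0)=0$ to $w(0)=0$ in the uniqueness step rest decisively on Proposition~\ref{inj-prop}, so this theorem really is a direct consequence of combining the Duhamel representation of $X$-solutions with semigroup injectivity.
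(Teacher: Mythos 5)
Your proposal is correct and follows essentially the same route as the paper: necessity and the identification $u_T-y_f=e^{-TA}u(0)$ via Duhamel at $t=T$, sufficiency by defining $u_0=e^{TA}(u_T-y_f)$ and solving the Cauchy problem, the estimate by substituting this $u_0$ into Proposition~\ref{pest-prop}, with semigroup injectivity (Proposition~\ref{inj-prop}) carrying the weight throughout. The only cosmetic difference is that you prove uniqueness by subtracting two solutions, whereas the paper reads it off from the fact that every $X$-solution must equal the right-hand side of \eqref{eq:fvp_solution}; the two arguments are logically equivalent.
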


The norm on the data space $Y$ in \eqref{eq:Y-intro} is seen at once to be the graph norm of the composite map
\begin{equation}
  L_2(0,T; V^*)\oplus H \xrightarrow[\;]{\quad \Phi\quad} H\xrightarrow[\;]{\quad e^{TA}\quad} H
\end{equation}
given by $(f,u_T)\ \mapsto u_T-y_f\ \mapsto \ e^{TA}(u_T-y_f)$ and $\Phi(f,u_T)=u_T-y_f$.

In fact,  the solvability criterion \eqref{eq:cc-intro} means that $e^{TA}\Phi$ must be defined at
$(f,u_T)$, so the data space $Y$ is its domain. Being an inverse, $e^{TA}$ is a closed operator in $H$, and so is
$e^{TA}\Phi$; hence $Y=D(e^{TA}\Phi)$ is complete. Now, since in \eqref{eq:Y-intro} the Banach space $V^*$
is Hilbertable, so is $Y$.

Thus the unbounded operator $e^{TA}\Phi$ is a key ingredient in the rigorous treatment of the
final value problem \eqref{fvA-intro}.
In control theoretic terms, the role of $e^{TA}\Phi$ is to provide the unique initial state given by
\begin{equation}
  u(0)=e^{TA}\Phi(f,u_T)=e^{TA}(u_T-y_f),
\end{equation}
which is steered by $f$ to the final state $u(T)=u_T$ at time $T$;
cf.\ the Duhamel formula \eqref{u-id}.

Criterion \eqref{eq:cc-intro} is a generalised \emph{compatibility} condition on
the data $(f,u_T)$; such conditions have long been known in the theory of parabolic problems, cf.\
Remark~\ref{GS-rem}.
The presence of $e^{-(T-t)A}$ and the integral over $[0,T]$ makes
\eqref{eq:cc-intro} \emph{non-local} in both space and time. This
aspect is further complicated by the reference to $D(e^{TA})$, which for larger
final times $T$ typically gives increasingly stricter conditions:

\begin{proposition}
If the spectrum $\sigma(A)$ of $A$ is not contained in the strip
$\set{z\in\C}{-k\le \Re z\le k}$, whereby $k$ is the constant from \eqref{coerciv-id}, then 
the domains $D(e^{tA})$ form a strictly descending chain, that is,
\begin{equation} \label{dom-intro}
 H\supsetneq D(e^{tA})\supsetneq D(e^{t' A}) \qquad\text{ for  $0<t<t'$}.
\end{equation}
\end{proposition}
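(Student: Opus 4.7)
The plan is to reduce strictness of the chain to non-surjectivity of $e^{-sA}\colon H\to H$ for every $s>0$, and then to deduce the latter from a spectral-mapping argument applied to the analytic semigroup.

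The weak inclusions $H\supset D(e^{tA})\supset D(e^{t'A})$ are immediate from the definition $D(e^{sA})=R(e^{-sA})$ and the semigroup identity $e^{-t'A}=e^{-(t'-t)A}e^{-tA}$, which gives $R(e^{-t'A})\subset R(e^{-tA})$. For the strict inclusions, it suffices to show $R(e^{-sA})\neq H$ for every $s>0$. Indeed, given $0<t<t'$ and setting $s=t'-t$, pick any $v\in H\setminus R(e^{-sA})$; then $u_0:=e^{-tA}v$ lies in $D(e^{tA})$, but if $u_0$ also belonged to $D(e^{t'A})=R(e^{-t'A})$, one could write $u_0=e^{-t'A}w=e^{-tA}(e^{-sA}w)$ for some $w\in H$, and injectivity of $e^{-tA}$ (Proposition~\ref{inj-prop}) would force $v=e^{-sA}w\in R(e^{-sA})$---contradiction. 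The first strict inclusion $H\supsetneq D(e^{tA})$ is just $R(e^{-tA})\neq H$, a special case of the same claim.

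To verify $R(e^{-sA})\neq H$: since $e^{-sA}\in\B(H)$ is injective, surjectivity is equivalent by the open mapping theorem to $0\notin\sigma(e^{-sA})$, so it suffices to show $0\in\sigma(e^{-sA})$. Because $-A$ generates an analytic semigroup, $A$ is sectorial, and $V$-coercivity confines $\sigma(A)$ to a right-opening sector with vertex on the half-line $\{\Re z\ge -k\}$. Recalling that a closed operator with non-empty resolvent set and bounded spectrum is itself bounded (via the holomorphic functional calculus applied to a contour enclosing $\sigma(A)$), the hypothesis $\sigma(A)\not\subset\{-k\le\Re z\le k\}$ combined with sector confinement yields a sequence $\lambda_n\in\sigma(A)$ with $\Re\lambda_n\to+\infty$. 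The spectral mapping theorem for analytic semigroups now gives $e^{-s\lambda_n}\in\sigma(e^{-sA})$, and $|e^{-s\lambda_n}|=e^{-s\Re\lambda_n}\to 0$ forces $0\in\sigma(e^{-sA})$ by closedness.

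The delicate step is the spectral one: it requires both that sectorial unbounded generators have spectrum with unbounded real part, and the spectral mapping theorem for analytic semigroups---sharper than the spectral inclusion available for general $C_0$-semigroups, and what enables the passage from $\sigma(A)$ to $0\in\sigma(e^{-sA})$.
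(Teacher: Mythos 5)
Your reduction is correct and efficient: the weak inclusions, the passage from ``$e^{-sA}$ is non-surjective for every $s>0$'' to strictness of the whole chain via injectivity, and the equivalence (by the open mapping theorem) of surjectivity of the injective operator $e^{-sA}$ with $0\notin\sigma(e^{-sA})$ are all sound. The gap is in the spectral step. First, the auxiliary claim that a closed operator with non-empty resolvent set and bounded spectrum must be bounded is false: $u\mapsto u'$ on $L_2(0,1)$ with domain $\set{u\in H^1(0,1)}{u(0)=0}$ is closed and unbounded with empty spectrum; the Riesz--Dunford contour integral only yields a projection splitting off a bounded part, and does not show that the complementary part (whose spectrum is empty) is trivial. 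Second, and more seriously, the logic does not close even if that claim is granted: the hypothesis $\sigma(A)\not\subset\set{z\in\C}{-k\le\Re z\le k}$, combined with the confinement $\sigma(A)\subset\set{z\in\C}{\Re z\ge -k}$ coming from $\|e^{-tA}\|_{\B(H)}\le e^{kt}$ and Hille--Yosida, produces only a \emph{single} point $\lambda_0\in\sigma(A)$ with $\Re\lambda_0>k$, not a sequence with $\Re\lambda_n\to+\infty$; the alternative ``$A$ is bounded'' contradicts nothing you have assumed. Your route genuinely requires $\sup\Re\sigma(A)=+\infty$, and it cannot reach the case where $A$ is unbounded while $\sigma(A)$ is bounded or even empty---precisely the situation of the complex Airy operators mentioned in the paper's closing remarks. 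A smaller point: the inclusion you actually use, $e^{-s\sigma(A)}\subset\sigma(e^{-sA})$, is the elementary spectral inclusion valid for \emph{every} $C_0$-semigroup; analyticity is needed only for the reverse inclusion, which you never invoke, so the ``delicate step'' is not where you place it.

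The paper itself proves the proposition by citation to Thm.~11 and Prop.~11 of \cite{ChJo18ax}, where the decisive mechanism is different and avoids the spectrum of $e^{-sA}$ altogether: if some $e^{-t_0A}$ were surjective, it would be invertible in $\B(H)$, hence every $e^{-tA}$ would be invertible and the semigroup would extend to a $C_0$-group; differentiability of the analytic semigroup gives $Ae^{-t_0A}\in\B(H)$, so every $x\in H$ satisfies $x=e^{-t_0A}y$ with $y=(e^{-t_0A})^{-1}x$ and $|Ax|\le\|Ae^{-t_0A}\|\,\|(e^{-t_0A})^{-1}\|\,|x|$, forcing $A\in\B(H)$. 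This needs no spectral mapping and covers exactly the cases your argument misses. To repair your proof, either substitute this group-extension argument for the step that was meant to produce $\Re\lambda_n\to+\infty$ (it needs only unboundedness of $A$, which is what the escape of $\sigma(A)$ from the strip is evidently meant to encode, since the strip meets the spectral sector in a bounded set), or state explicitly that you are working under the stronger hypothesis that $\sigma(A)$ is unbounded.
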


This results from the injectivity of $e^{-tA}$ via well-known facts for semigroups 
reviewed in \cite[Thm.\ 11]{ChJo18ax} (with reference to \cite{Paz83}). In fact, the arguments
given for $k=0$ in \cite[Prop.\ 11]{ChJo18ax} apply mutatis mutandis.

Now, \eqref{u-id} also shows that $u(T)$ has two 
radically different contributions, even if $A$ has nice properties.
First, for $t=T$ the integral equals $y_f$, which can be
\emph{anywhere} in $H$.
Indeed, $f\mapsto y_f$ is a continuous \emph{surjection} $y_f\colon L_2(0,T;V^*)\to H$.
This was shown for $k=0$ via the Closed Range Theorem in \cite[Prop.\ 5]{ChJo18ax},
and for $k>0$ surjectivity follows from this case as
$e^{-(T-s)A}f(s)=e^{-(T-s)(A+kI)}e^{-sk}f(s)$ in \eqref{yf-eq}, whereby $A+kI$ is $V$-elliptic and $f\mapsto
e^{-sk}f$ is a bijection on $L_2(0,T;V^*)$.

Secondly, $e^{-tA}u(0)$ solves $u'+Au=0$, and for $u(0)\ne0$ and $V$-elliptic  $A$ it is a
precise property in non-selfadjoint dynamics that the ``height'' $h(t)= |e^{-tA}u(0)|$ is
\begin{align*}
  &\text{strictly positive ($h>0$)},\\ &\text{strictly decreasing ($h'<0$)},\\ &\text{\emph{strictly convex}
    ($\Leftarrow h''>0$)}  .
\end{align*}
Whilst this holds if $A$ is self-adjoint or normal, it was emphasized in \cite{ChJo18ax}
that it suffices that $A$ is just hyponormal (i.e., $D(A)\subset D(A^*)$ and $|Ax|\ge|A^*x|$ for
$x\in D(A)$, following Janas \cite{Jan94}). Recently this was followed up by the author in \cite{18logconv}, where the stronger 
logarithmic convexity of $h(t)$ was proved \emph{equivalent} to the formally weaker property of $A$
that, for $x\in D(A^2)$,
\begin{equation} \label{Alogconv-id}
  2(\Re\scal{A x}{x})^2\le \Re\scal{A^2x}{x}|x|^2+|A x|^2|x|^2 .
\end{equation}
For $V$-coercive $A$ only the strict decrease may need to be relinquished. Indeed, the strict
positivity $h(t)>0$ follows by the injectivity of $e^{-tA}$ in Proposition~\ref{inj-prop} below.
Moreover, the characterisation in \cite[Lem.\ 2.2]{18logconv}
of the log-convex $C^2$-functions $f(t)$ on $[0,\infty[\,$ as the solutions of
the differential inequality $f''\cdot f\ge (f')^2$  and the resulting criterion for $A$ in
\eqref{Alogconv-id} apply \emph{verbatim} to the coercive case; hereby the differential calculus in
Banach spaces is exploited in a classical derivation of  the formulae for $u(t)=e^{-tA}u(0)$,
\begin{align}
  h'(t)&=-\frac{\Re\scal{Au(t)}{u(t)}}{|u(t)|},
\\
  h''(t)&=\frac{\Re\scal{A^2u(t)}{u(t)}+|Au(t)|^2}{|u(t)|}
            -\frac{(\Re\scal{Au(t)}{u(t)})^2}{|u(t)|^3}.
\end{align}
But it is due to the strict positivity $|e^{-tA}u(0)|>0$ for $t\ge0$ in the denominators that
the expressions make sense, so injectivity of $e^{-tA}$ also enters crucially at this point.
Similarly the singularity of $|\cdot|$ at the origin poses no problems for the
mere differentiation of $h(t)$. 
Therefore it is likely that the natural formulas for $h'$, $h''$ have not been rigorously
proved before \cite{JJ19}. These remarks also shed light on the usefulness of Proposition~\ref{inj-prop} below.

However, the stiffness intrinsic to \emph{strict} convexity, hence to log-convexity, corresponds
well with the fact that
$u(T)=e^{-TA}u(0)$ in any case is confined to a dense, but very small space, as by the analyticity
\begin{equation}
  \label{DAn-cnd}
  u(T)\in \textstyle{\bigcap_{n\in\N}} D(A^n).
\end{equation}
For $u'+Au=f$, the possible 
$u_T$ will hence be a sum of some arbitrary $y_f\in H$ and a
stiff term $e^{-TA}u(0)$. Thus $u_T$ can be prescribed in the affine space
$y_f+D(e^{TA})$. As any $y_f\ne0$ will shift $D(e^{TA})\subset H$
in an arbitrary direction, $u(T)$ can be expected \emph{anywhere} in $H$ (unless $y_f\in D(e^{TA})$ is known).
So neither \eqref{DAn-cnd} nor $u(T)\in D(e^{TA})$ can be expected to hold if $y_f\ne0$---not even
if $|y_f|$ is much smaller than $|e^{-TA}u(0)|$. Hence it seems best for final value problems to
consider inhomogeneous problems from the outset.

\begin{remark}
To give some backgound, two classical observations for the homogeneous case $f=0$, $g=0$ in
\eqref{heat-intro} are  recalled. First there is the smoothing effect for $t>0$ of parabolic Cauchy
problems, which means that $u(t,x)\in C^\infty(\,]0,T]\times\overline\Omega)$ whenever $u_0\in L_2(\Omega)$. 
(Rauch \cite[Thm.\ 4.3.1]{Rau91} has a version for $\Omega=\Rn$; Evans
\cite[Thm.\ 7.1.7]{Eva10} gives the stronger result $u\in C^\infty([0,T]\times\overline\Omega)$ when
$f\in C^\infty([0,T]\times\overline\Omega)$, $g=0$ and $u_0\in C^\infty(\overline\Omega)$ fulfill the
classical compatibility conditions at $\{0\}\times\Gamma$---which for $f=0$, $g=0$ gives the $C^\infty$
property on $[\varepsilon, T]\times\overline{\Omega}$ for any $\varepsilon>0$, hence on  $\,]0, T]\times\overline{\Omega}$).
Therefore $u(T,\cdot)\in C^\infty(\overline\Omega)$; whence \eqref{heat-intro}
with $f=0$, $g=0$ cannot be solved if $u_T$ is prescribed arbitrarily in $L_2(\Omega)$. But this just
indicates an asymmetry in the properties of the initial and final value problems.

Secondly, there is a phenomenon of $L_2$-instability in case $f=0$, $g=0$ in \eqref{heat-intro}, which perhaps was first described by
Miranker  \cite{Miranker61}. The instability is found via the Dirichlet realization
of the Laplacian, $\mlap_D$, and its $L_2(\Omega)$-orthonormal
basis $e_1(x), e_2(x),\dots$ of eigenfunctions associated to the
usual ordering of its eigenvalues
$0<\lambda_1\le\lambda_2\le\dots$, which via Weyl's law for the counting function, cf.\ \cite[Ch.~6.4]{CuHi53}, gives
\begin{equation} \label{Weyl-id}
  \lambda_j={\cal O}(j^{2/n})\quad\text{ for $j\to\infty$}.
\end{equation}
This basis gives rise to a sequence of final value data
$u_{T,j}(x)=e_j(x)$ lying on the unit sphere in $L_2(\Omega)$ as
$\|u_{T,j}\|= \|e_j\|=1$ for $j\in\N$.
But the corresponding solutions to $u'\mlap u=0$,
i.e.\  $u_j(t,x)=e^{(T-t)\lambda_j}e_j(x)$,
have initial states $u(0,x)$ with $L_2$-norms that because of \eqref{Weyl-id} 
grow \emph{rapidly} with the index $j$,
\begin{equation}
  \|u_j(0,\cdot)\| = e^{T\lambda_j}\|e_j\| = e^{T\lambda_j}\nearrow\infty.
\end{equation}
This $L_2$-instability cannot be removed, of course, but it
only indicates that the $L_2(\Omega)$-norm is an insensitive choice for problem \eqref{heat-intro}. The
task is hence to obtain a norm on $u_T$ giving better control over the backward
calculations of $u(t,x)$---for the inhomogeneous heat problem \eqref{heat-intro}, 
an account of this was given in \cite{ChJo18ax}.
\end{remark}

\begin{remark}
Almog, Grebenkov, Helffer, Henry \cite{AlHe15,GrHelHen17,GrHe17} studied
the complex Airy operator $\mlap+\im x_1$ recently via triples $(H,V,a)$, leading to Dirichlet, Neumann, Robin and
transmission boundary conditions, in bounded and unbounded regions. 
Theorem~\ref{intro-thm} is expected to apply to final value problems for those of their
realisations that satisfy the coercivity condition in \eqref{coerciv-id}. However, 
$\mlap+\im x_1$ has empty spectrum on $\Rn$, cf.\ the fundamental paper of Herbst \cite{Her79}, so it
remains to be seen for which of the regions in \cite{AlHe15,GrHelHen17,GrHe17} there is a strictly descending
chain of domains as in \eqref{dom-intro}.
\end{remark}

\section{Preliminaries: Injectivity of Analytic Semigroups}
As indicated in the introduction, it is central to the analysis of final value problems that an
analytic semigroup of operators, like $e^{t\lap_D}$, always consists of \emph{injections}. This
shows up both at the technical and  conceptual level, that is, both in the proofs
and in the objects that enter the theorem.

A few aspects of semigroup theory in a complex Banach space $B$ is therefore recalled.
Besides classical references by Davies~\cite{Dav80}, Pazy~\cite{Paz83}, Tanabe~\cite{Tan79} or Yosida~\cite{Yos80},
a more recent account is given in \cite{ABHN11}.

The generator is
$\gen x=\lim_{t\to0^+}\frac1t(e^{t\gen}x-x)$, where $x$ belongs to the domain  $D(\gen)$ when the
limit exists. $\gen$ is a densely defined, closed linear 
operator in $B$ that for some $\omega \geq 0$, $M \geq 1$ satisfies the resolvent estimates
$\|(\gen-\lambda)^{-n}\|_{\B(B)}\le M/(\lambda-\omega)^n$ for $\lambda>\omega$, $n\in\N$.

The corresponding $C_0$-semigroup of operators $e^{t\gen}\in\B(B)$ is of type $(M,\omega)$: 
it fulfils that $e^{t\gen}e^{s \gen}=e^{(s+t)\gen}$ for $s,t\ge0$, $e^{0\gen}=I$ and 
$\lim_{t\to0^+}e^{t \gen}x=x$ for $x\in B$; whilst
\begin{align}  
  \|e^{t\gen}\|_{\B(B)} \leq M e^{\omega t} \quad \text{ for } 0 \leq t < \infty.
\end{align}
Indeed, the Laplace transformation 
$(\lambda I-\gen)^{-1}=\int_0^\infty e^{-t\lambda}e^{t\gen}\,dt$ 
gives a bijection of the semigroups of type $(M,\omega)$ onto (the resolvents of) the stated class of generators.

To elucidate the role of \emph{injectivity}, 
recall that if $e^{t\gen}$ is analytic, $u'=\gen u$, $u(0)=u_0$ is uniquely solved by
$u(t)=e^{t\gen}u_0$ for \emph{every} $u_0\in B$. 
Here injectivity of $e^{t\gen}$ is  equivalent to the important geometric property that the trajectories of two solutions
$e^{t\gen}v$ and $e^{t\gen}w$ of $u'=\gen u$  have no confluence point in $B$ for $v\ne w$.

Nevertheless, the literature seems to have focused on examples of semigroups with non-invertibility of
$e^{t\gen}$, like \cite[Ex.~2.2.1]{Paz83}; these necessarily concern non-analytic cases.
The well-known result below gives a criterion for $\gen$ to generate a
$C_0$-semigroup $e^{z\gen}$ that is defined and analytic for $z$ in the open sector
\begin{equation}
  S_{\theta}= \Set{z\in\C}{z\ne0,\ |\arg z | < \theta}.
\end{equation}
It is formulated  in terms of the spectral sector
\begin{equation} 
  \Sigma_{\theta} 
  =\left\{ 0 \right\}\cup \Set{ \lambda \in\C}{ |\arg\lambda | < \frac{\pi}{2} + \theta} .
\end{equation}

\begin{proposition}  \label{Pazy'-prop} 
If $\gen$ generates a $C_0$-semigroup of type $(M,\omega)$ and $\omega\in\rho(\gen)$,
the following properties are equivalent for each
$\theta \in\,]0,\frac{\pi}{2}[\,$:
\begin{itemize}
  \item[{\rm (i)}]
  The resolvent set $\rho(\gen)$ contains $\omega+\Sigma_{\theta}$  and 
\begin{equation} 
  \sup\Set{|\lambda-\omega|\cdot\|(\lambda I - \gen)^{-1} \|_{\B(B)}}{\lambda\in\omega+\Sigma_{\theta}, \
    \lambda \neq \omega} <\infty. 
\end{equation}
  \item[{\rm (ii)}] 
 The semigroup $e^{t \gen}$ extends to an analytic semigroup $e^{z \gen}$ defined for $z\in
 S_{\theta}$ with
\begin{equation}
   \sup\Set{ e^{-z\omega}\|e^{z\gen}\|_{\B(B)}}{z\in \overline{S}_{\theta'}}<\infty \quad \text{whenever $0<\theta'<\theta$}. 
\end{equation}
\end{itemize}
In the affirmative case, 
$e^{t \gen}$ is differentiable in $\B(B)$ for $t>0$ with derivative $(e^{t\gen})' = \gen
e^{t\gen}$, and for every $\eta$ such  that $\alpha(\gen)<\eta<\omega$ one has
\begin{align} \label{eta-est}
  \sup_{t>0} e^{-t\eta}\|e^{t\gen}\|_{\B(B)}+\sup_{t>0} te^{-t\eta}\|\gen e^{t\gen}\|_{\B(B)} <\infty,
\end{align}
 whereby $\alpha(\gen)=\sup\Re\sigma(\gen)$
denotes the spectral abscissa of $\gen$ (here $\alpha(\gen)<\omega$, as $0\in\Sigma_\theta$). 
\end{proposition}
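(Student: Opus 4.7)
The plan is to follow the classical Dunford--Taylor calculus, as presented in Pazy \cite{Paz83} or \cite{ABHN11}. After translating the spectral parameter, I may reduce to the case $\omega=0$, so that $\Sigma_\theta\subset\rho(\gen)$ and $\|(\lambda I-\gen)^{-1}\|_{\B(B)}\le C|\lambda|^{-1}$ on $\Sigma_\theta\setminus\{0\}$.

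For the implication (i)$\Rightarrow$(ii), I would define the analytic extension by the Dunford integral
\begin{equation*}
e^{z\gen}=\frac{1}{2\pi\im}\int_\Gamma e^{z\lambda}(\lambda I-\gen)^{-1}\,d\lambda,\qquad z\in S_{\theta'},
\end{equation*}
where $\Gamma$ runs to infinity in $\Sigma_\theta$ along the rays $\arg\lambda=\pm(\pi/2+\theta'')$ with $\theta'<\theta''<\theta$, encircling $0$. The sectorial resolvent bound, together with the exponential decay $|e^{z\lambda}|\le e^{-c|z||\lambda|}$ along the rays (with $c=\sin(\theta''-\theta')>0$), gives absolute convergence in $\B(B)$ and analyticity in $z\in S_{\theta'}$; the norm bound in (ii) then follows by the rescaling $\lambda\mapsto\lambda/|z|$. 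The semigroup identity $e^{(z_1+z_2)\gen}=e^{z_1\gen}e^{z_2\gen}$ comes from Fubini and the first resolvent identity applied on two slightly displaced copies of $\Gamma$, and strong continuity $e^{z\gen}x\to x$ as $S_{\theta'}\ni z\to 0$ is checked first on $D(\gen)$ by inserting $(\lambda I-\gen)^{-1}x=\lambda^{-1}x+\lambda^{-1}(\lambda I-\gen)^{-1}\gen x$ (which supplies an extra factor $\lambda^{-1}$ in the integrand), and then extended by density via the uniform bound.

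For (ii)$\Rightarrow$(i), I would rotate the Laplace contour: given $\lambda\in\Sigma_\theta\setminus\{0\}$, choose $\phi\in\,]-\theta,\theta[\,$ with $\Re(\lambda e^{\im\phi})\ge\varepsilon|\lambda|$ and set
\begin{equation*}
R(\lambda)=\int_0^{\infty e^{\im\phi}}e^{-\lambda z}e^{z\gen}\,dz.
\end{equation*}
The bound from (ii) yields absolute convergence and $|\lambda|\cdot\|R(\lambda)\|_{\B(B)}\le M/\varepsilon$, while closedness of $\gen$ together with integration by parts identifies $R(\lambda)=(\lambda I-\gen)^{-1}$. Norm-differentiability of $e^{t\gen}$ with $(e^{t\gen})'=\gen e^{t\gen}$ then follows from differentiation under the sign of integration in the Dunford formula, using $\lambda(\lambda I-\gen)^{-1}=I+\gen(\lambda I-\gen)^{-1}$.

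For the $\eta$-estimate with $\alpha(\gen)<\eta<\omega$, I would deform $\Gamma$ to a contour with vertex at $\eta$: since $\sigma(\gen)\subset\{\Re\lambda\le\alpha(\gen)\}$ is closed and separated from $\eta$, the resolvent remains analytic on such a shifted sector, and a Cauchy/Neumann-series comparison with the bound near $\omega$ propagates the sectorial estimate to a uniform bound with vertex $\eta$ of some positive opening. Parameterising the shifted contour and rescaling by $t$ yields the factors $e^{t\eta}$ and $t^{-1}e^{t\eta}$ in \eqref{eta-est}. The hardest step is the construction in (i)$\Rightarrow$(ii): combining Fubini with contour deformations to verify both the semigroup law and strong continuity at $0^+$ requires care with interchanging limits and integrals. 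The re-centring of the sector from $\omega$ to $\eta$ is the next most delicate point, since the sectorial resolvent control is assumed only with vertex $\omega$ and must be extended to vertex $\eta$ by exploiting analyticity of $(\lambda I-\gen)^{-1}$ on the full complement of $\sigma(\gen)$.
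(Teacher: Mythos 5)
Your proposal is correct in substance, but it takes a genuinely different route from the paper. The paper does not reprove the Dunford--Taylor machinery at all: it observes that $\gen=\omega I+(\gen-\omega I)$ gives the operator identity $e^{t\gen}=e^{t\omega}e^{t(\gen-\omega I)}$ (verified via the bijectivity of the Laplace transform), reduces both implications to the case $\omega=0$, and for that case simply cites Theorem~2.5.2 of Pazy; the estimate \eqref{eta-est} is then obtained by two further translations, writing $e^{t\gen}=e^{t\eta}e^{t(\gen-\eta I)}$ with $e^{t(\gen-\eta I)}$ of type $(M,0)$ since $\alpha(\gen)<\eta$, and $e^{t\gen}=e^{t\eta'}e^{t(\gen-\eta' I)}$ for $\alpha(\gen)<\eta'<\eta$ to control $t\|\gen e^{t\gen}\|$ from the $\omega=0$ bound $\sup_t t\|(\gen-\eta'I)e^{t(\gen-\eta'I)}\|<\infty$. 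What you do instead is reconstruct the proof of the cited theorem from first principles (the contour integral for (i)$\Rightarrow$(ii), the rotated Laplace transform for (ii)$\Rightarrow$(i), and a contour shift to vertex $\eta$ for \eqref{eta-est}); this buys self-containedness at the cost of the delicate verifications you yourself flag, whereas the paper's translation trick buys brevity and, for \eqref{eta-est}, avoids your re-centring argument entirely --- your shift of the sector's vertex from $\omega$ to $\eta$ does work, but only after choosing the new opening angle small enough that $\eta+\Sigma_{\theta'}$ misses $\sigma(\gen)$ and invoking compactness of the leftover region, details the algebraic identity $e^{t\gen}=e^{t\eta}e^{t(\gen-\eta I)}$ sidesteps. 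One small caution common to your (ii)$\Rightarrow$(i) and the classical formulation: the constant $M/\varepsilon$ degenerates as $\arg\lambda\to\pm(\pi/2+\theta)$, so the rotated-Laplace argument literally yields the uniform bound on each $\Sigma_{\theta''}$, $\theta''<\theta$, rather than on $\Sigma_\theta$ itself; since the paper defers to Pazy at exactly this point, this is not a gap relative to the paper's own proof.
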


In case $\omega=0$,
the equivalence is just a review of the main parts of Theorem~2.5.2 in \cite{Paz83}.
For general $\omega\ge0$, one can reduce to this case, since $\gen=\omega I+(\gen-\omega I)$ yields the operator
identity $e^{t\gen}=e^{t\omega}e^{t(\gen-\omega I)}$, where $e^{t(\gen-\omega I)}$ is of type
$(M,0)$ for some $M$. Indeed, the right-hand side is easily seen to be a $C_0$-semigroup, which
since $e^{t\omega}=1+t\omega+o(t)$ has 
$\gen$ as its generator, so the identity results from the bijectiveness of the Laplace transform.
In this way, (i)$\iff$(ii) follows straightforwardly from the case $\omega=0$, using for both implications that 
$e^{z\gen}=e^{z\omega}e^{z(\gen-\omega I)}$ holds in $S_\theta$ by unique analytic extension.

Since $\omega\in\rho(\gen)$ implies $\alpha(\gen)<\omega$, the above translation method gives 
$e^{t\gen}=e^{t\eta}e^{t(\gen-\eta I)}$, where $e^{t(\gen-\eta I)}$ is of type $(M,0)$ whenever
$\alpha(\gen)<\eta<\omega$. This yields the first part of \eqref{eta-est}, and the second now
follows from this and the case $\omega=0$ by means of the splitting $\gen=\eta' I+(\gen-\eta' I)$
for $\alpha(\gen)<\eta'<\eta$.

The reason for stating Proposition~\ref{Pazy'-prop} for general type $(M,\omega)$
semigroups is that it shows explicitly that cases with $\omega>0$ only have other estimates on
$\R_+$ or in
the closed subsectors $\overline{S}_{\theta'}$---but the mere analyticity in $S_{\theta}$ 
is unaffected by the translation by $\omega I$. Hence one has the following improved version of
\cite[Prop.\ 1]{ChJo18ax}:

\begin{proposition}  \label{inj-prop}
If a $C_0$-semigroup $e^{t\gen}$ of type $(M,\omega)$ on a complex Banach space $B$ has an
analytic extension $e^{z\gen}$ to
$S_{\theta}$ for some $\theta>0$, then $e^{z\gen}$ is \emph{injective} for every $z \in S_\theta$.
\end{proposition}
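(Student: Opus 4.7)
The plan is to exploit the semigroup law together with the identity theorem for strongly analytic vector-valued functions, and then to close the argument via the strong continuity at the origin. Fix $z_0\in S_\theta$ and suppose $x\in B$ satisfies $e^{z_0\gen}x=0$; I shall deduce $x=0$.

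First, I would define $g\colon S_\theta\to B$ by $g(z):=e^{z\gen}x$. By hypothesis $e^{z\gen}$ is analytic in $\B(B)$ on $S_\theta$, so $g$ is a strongly analytic $B$-valued function on $S_\theta$. Next, I invoke the semigroup law $e^{z_1\gen}e^{z_2\gen}=e^{(z_1+z_2)\gen}$ for $z_1,z_2\in S_\theta$, which extends from the real semigroup identity by unique analytic continuation in each variable separately. Because $S_\theta$ is a cone about the origin, it is closed under addition; hence for every $w\in S_\theta$ the point $z_0+w$ lies in $S_\theta$, and
\begin{equation*}
  g(z_0+w)=e^{w\gen}e^{z_0\gen}x=e^{w\gen}\cdot 0=0.
\end{equation*}
Thus $g$ vanishes on the nonempty open translate $z_0+S_\theta\subset S_\theta$.

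Now I would apply the identity theorem. For any $\varphi\in B^*$, the scalar function $\varphi\circ g$ is analytic on the connected open set $S_\theta$ and vanishes on the open subset $z_0+S_\theta$, so $\varphi(g(z))=0$ throughout $S_\theta$. Since $\varphi$ is arbitrary, the Hahn--Banach theorem yields $g\equiv 0$ on $S_\theta$. The positive real axis is contained in $S_\theta$, so in particular $e^{t\gen}x=g(t)=0$ for all $t>0$. Letting $t\to 0^+$ and using the $C_0$-property $e^{t\gen}x\to x$, I conclude $x=0$, establishing injectivity of $e^{z_0\gen}$.

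The main obstacle I expect is not the argument itself but the justification of the complex semigroup law $e^{z_1\gen}e^{z_2\gen}=e^{(z_1+z_2)\gen}$ on $S_\theta$; this requires one to analytically continue the identity from the real parameters $t,s>0$, first fixing $z_2$ real and letting $z_1$ vary in $S_\theta$, then varying $z_2$. Both steps are standard consequences of the analyticity asserted in Proposition~\ref{Pazy'-prop}, and the resulting identity then drives the remainder of the proof with no further subtleties.
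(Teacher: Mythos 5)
Your proof is correct, but it propagates the zero in a genuinely different way from the paper. Both arguments share the same skeleton: show that the orbit map $z\mapsto e^{z\gen}x$ vanishes identically on $S_\theta$, then recover $x=\lim_{t\to0^+}e^{t\gen}x=0$ from strong continuity. The paper obtains the identical vanishing by Taylor-expanding at $z_0$ and using the derivative formula $f^{(n)}(z_0)=\gen^{n}e^{z_0\gen}x$ for analytic semigroups (Pazy, Lem.~2.4.2), so that every Taylor coefficient equals $\gen^{n}0=0$ and the function vanishes on a ball around $z_0$; you instead use the complex-parameter semigroup law to get $e^{(z_0+w)\gen}x=e^{w\gen}e^{z_0\gen}x=0$ on a translate of the sector, and then invoke the identity theorem via Hahn--Banach. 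Your route trades the derivative formula for the semigroup law on $S_\theta$; the latter is part of the very definition of an analytic semigroup and is exactly what Proposition~\ref{Pazy'-prop}(ii) supplies, so the continuation argument you flag as the main obstacle is already available and costs nothing. Two small points of precision: closure of $S_\theta$ under addition requires convexity of the sector, i.e.\ $\theta\le\pi/2$ (automatic in this paper, where $\theta<\pi/2$ throughout, but not implied by the cone property alone); and in any case you do not need a whole open translate --- vanishing on the segment $\set{z_0+t}{0<t<\varepsilon}$, which lies in $S_\theta$ for small $\varepsilon$ and accumulates at $z_0$, already suffices for the identity theorem, which makes your argument independent of the size of $\theta$. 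Either way the conclusion stands.
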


\begin{proof}
Let $e^{z_0 \gen} u_0 = 0$ hold for some $u_0 \in B$ and $z_0 \in S_{\theta}$.
Analyticity of $e^{z\gen}$ in $S_{\theta}$ carries over by the differential calculus in Banach
spaces to the map $f(z)= e^{z\gen}u_0$.
So for $z$ in a suitable open ball $B(z_0,r)\subset S_{\theta}$, a Taylor expansion and the identity
$f^{(n)}(z_0) = \gen^n e^{z_0 \gen}u_0$ for analytic semigroups (cf.~\cite[Lem.~2.4.2]{Paz83})  give
\begin{align}
  f(z) = \sum_{n=0}^{\infty} \frac{1}{n!}(z-z_0)^n f^{(n)}(z_0)=\sum_{n=0}^{\infty}
  \frac{1}{n!}(z-z_0)^n 
    \gen^n e^{z_0 \gen}u_0\equiv 0.
\end{align}
Hence $f\equiv 0$ on $S_{\theta}$ by unique analytic extension.
Now, as $e^{t\gen}$ is strongly continuous at $t=0$, we have
$u_0 = \lim_{t \rightarrow 0^+} e^{t\gen} u_0 = \lim_{t \rightarrow 0^+}f(t) = 0$.
Thus the null space of $e^{z_0\gen}$ is trivial.
\end{proof}

\begin{remark} 
The injectivity in Proposition~\ref{inj-prop} 
was claimed in \cite{Sho74} for  
$z>0$, $\theta\le \pi/4$ and $B$ a Hilbert space
(but not quite obtained, as noted in \cite[Rem.~1]{ChJo18ax}; cf.\ the details
in Lemma 3.1 and Remark 3 in \cite{18logconv}). A local version for the
Laplacian on $\Rn$ was given by Rauch \cite[Cor.~4.3.9]{Rau91}.
\end{remark}

As a consequence of the above injectivity, for an \emph{analytic} semigroup 
$e^{t\gen}$ we may consider its inverse that, consistently with the case in which $e^{t\gen}$ forms a group in
$\mathbb{B}(B)$, may be denoted for $t>0$ by
$e^{-t\gen} = (e^{t\gen})^{-1}$.
Clearly $e^{-t\gen}$ maps $D(e^{-t\gen})=R(e^{t\gen})$ bijectively onto $H$, and
it is  an unbounded, but closed operator in $B$. 

Specialising to a Hilbert space $B=H$, then also $(e^{t\gen})^*=e^{t\gen^*}$ is analytic, so 
$Z(e^{t\gen^*})=\{0\}$ holds for its null space by Proposition~\ref{inj-prop}; whence
$D(e^{-t\gen})$ is dense in $H$.
Some further basic properties are:

\begin{proposition}{\cite[Prop.\;2]{ChJo18ax}}  \label{inverse-prop}
The above inverses $e^{-t\gen}$ form a semigroup of unbounded operators in $H$,
\begin{equation} 
  e^{-s\gen}e^{-t\gen}= e^{-(s+t)\gen} \qquad \text{for $t, s\ge0$}.
\end{equation}
This extends to $(s,t)\in\R\times \,]-\infty,0]$, whereby $e^{-(t+s)\gen}$  may be unbounded for $t+s>0$. 
Moreover, as unbounded operators the $e^{-t\gen}$ commute with $e^{s \gen}\in \B(H)$, that is,
$e^{s \gen}e^{-t\gen}\subset e^{-t\gen}e^{s\gen}$ for $t,s\ge0$,
and have a descending chain of domains,
$H\supset  D(e^{-t\gen}) \supset D(e^{-t'\gen})$ for $0<t<t'$.
\end{proposition}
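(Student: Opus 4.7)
The plan is to reduce all four claims to the original semigroup law $e^{r\gen}e^{r'\gen}=e^{(r+r')\gen}$ for $r,r'\ge0$ by tracking domains as ranges, using the injectivity from Proposition~\ref{inj-prop} to identify preimages uniquely.

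For the central identity $e^{-s\gen}e^{-t\gen}=e^{-(s+t)\gen}$ with $s,t\ge0$, I would argue as follows: if $u\in D(e^{-(s+t)\gen})=R(e^{(s+t)\gen})$, the factorisation $e^{(s+t)\gen}=e^{t\gen}e^{s\gen}$ gives $u=e^{t\gen}e^{s\gen}v$ with $v$ unique by injectivity, so $e^{-t\gen}u=e^{s\gen}v\in R(e^{s\gen})=D(e^{-s\gen})$ and both compositions send $u$ to $v$. The converse inclusion $D(e^{-s\gen}e^{-t\gen})\subset D(e^{-(s+t)\gen})$ is obtained by reading this chain backwards.

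The extension to $(s,t)\in\R\times\,]-\infty,0]$ I would obtain by a case analysis on the signs of $s$ and $s+t$. For $s\le0$ all three operators are bounded, so the identity reduces to the original semigroup law. For $s>0$ with $s+t\le0$, the factorisation $e^{-t\gen}=e^{s\gen}e^{-(s+t)\gen}$ (valid since $-t=s+(-s-t)$ has non-negative summands) shows that $e^{-t\gen}$ maps $H$ into $R(e^{s\gen})=D(e^{-s\gen})$ and yields the identity on all of $H$ after applying $e^{-s\gen}$. For $s>0$ with $s+t>0$, I would argue via ranges: the relation $e^{-t\gen}u\in R(e^{s\gen})$, written as $e^{-t\gen}u=e^{s\gen}w=e^{-t\gen}e^{(s+t)\gen}w$, forces $u=e^{(s+t)\gen}w$ by injectivity of the bounded operator $e^{-t\gen}$ (Proposition~\ref{inj-prop}), placing $u$ in $D(e^{-(s+t)\gen})$.

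The commutation $e^{s\gen}e^{-t\gen}\subset e^{-t\gen}e^{s\gen}$ for $s,t\ge0$ is then immediate from the bounded semigroup law: for $x=e^{t\gen}y\in D(e^{-t\gen})$, one has $e^{s\gen}x=e^{t\gen}e^{s\gen}y\in R(e^{t\gen})=D(e^{-t\gen})$, and applying $e^{-t\gen}$ gives $e^{s\gen}y=e^{s\gen}e^{-t\gen}x$. The descending chain $H\supset D(e^{-t\gen})\supset D(e^{-t'\gen})$ for $0<t<t'$ follows from the factorisation $e^{t'\gen}=e^{t\gen}e^{(t'-t)\gen}$, which forces $R(e^{t'\gen})\subset R(e^{t\gen})$. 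The main obstacle I anticipate is the bookkeeping in the extension step, where mixed signs of the exponents force alternation between bounded operators and their unbounded inverses; however, since each $e^{r\gen}$ is injective, every preimage is uniquely determined and all domain identifications propagate without ambiguity.
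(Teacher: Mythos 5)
Your proposal is correct, and it follows the natural route that the paper itself merely points to: Proposition~\ref{inverse-prop} is quoted without proof from \cite[Prop.\ 2]{ChJo18ax}, and the argument there is in essence the same reduction of $e^{-s\gen}e^{-t\gen}=e^{-(s+t)\gen}$ to the bounded semigroup law $e^{(s+t)\gen}=e^{t\gen}e^{s\gen}$ combined with the injectivity from Proposition~\ref{inj-prop} (equivalently, $(e^{t\gen}e^{s\gen})^{-1}=e^{-s\gen}e^{-t\gen}$ for injective operators), with the range identity $R(e^{t'\gen})=R(e^{t\gen}e^{(t'-t)\gen})\subset R(e^{t\gen})$ giving the descending domains. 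The only place you are terse is the case $s>0$, $s+t>0$ of the extension, where the reverse domain inclusion and the agreement of values should be written out, but this is the same computation read in the other direction and poses no difficulty.
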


\begin{remark} \label{domain-rem}
The above domains serve as basic structures for the final value problem \eqref{heat-intro}.
They apply for $\gen=-A$ that generates an analytic semigroup $e^{-zA}$ in
$\B(H)$ defined in $S_{\theta}$ for $\theta=\operatorname{arccot}(C_3/C_4)>0$.  
Indeed, this was shown in \cite[Lem.\ 4]{ChJo18ax} with a concise argument using
$V$-ellipticity of $A$; the $V$-coercive case follows easily from
this via the formula $e^{-zA}=e^{kz}e^{-z(A+kI)}$ that results for $z\ge 0$ from the  translation trick
after Proposition~\ref{Pazy'-prop}; and then it defines $e^{-zA}$ by the right-hand side for every
$z\in S_\theta$.
(A rather more involved argument was given in \cite[Thm.\ 7.2.7]{Paz83} in a context of  uniformly
strongly elliptic differential operators.)
\end{remark}

\section{Proof of Theorem~\ref{intro-thm}}
  \label{aproof-sect}

To clarify a redundancy in the set-up, it is remarked here that in Proposition~\ref{LiMa-prop}
the solution space $X$ is a Banach space, which can have its norm in 
\eqref{eq:X} rewritten in  the following form, using the Sobolev space 
$H^1(0,T;V^*)=\Set{u\in L_2(0,T;V^*)}{\partial_t u\in L_2(0,T;V^*)}$,
\begin{align}  \label{eq:Xnorm}
  \|u\|_{X} = \big(\|u\|^2_{L_2(0,T;V)} + \sup_{0 \leq t \leq T}|u(t)|^2 + \|u\|^2_{H^1(0,T;V^*)}\big)^{1/2}.
\end{align}
Here there is a well-known inclusion $L_2(0,T;V)\cap H^1(0,T;V^*)\subset C([0,T];H)$ and an associated
Sobolev inequality for vector functions
(\cite{ChJo18ax} has an elementary proof)
  \begin{equation} \label{Sobolev-ineq}
  \sup_{0\le t\le T}| u(t)|^2\le (1+\frac{C_2^2}{C_1^2T})\int_0^T \|u(t)\|^2\,dt+\int_0^T \|u'(t)\|_*^2\,dt.
  \end{equation}
Hence one can safely omit the space $C([0,T];H)$ in  \eqref{eq:X} and remove
$\sup_{[0,T]}|u|$ from $\| \cdot\|_{X}$. Similarly $\int_0^T\|u(t)\|_*^2\,dt$ is redundant
in \eqref{eq:X} because $\|\cdot\|_*\le C_2\|\cdot\|$, so an equivalent norm on $X$ is given by
\begin{equation} \label{Xnorm-id}
  \vvvert u\vvvert_X =
  \big(\int_0^T \|u(t)\|^2\,dt +\int_0^T \|u'(t)\|_*^2\,dt\big)^{1/2}.   
\end{equation}
Thus $X$ is more precisely a Hilbertable space, as $V^*$ is so.
But the form given in \eqref{eq:X} is preferred in order to emphasize the properties of the solutions.

As a note on the equation $u'+Au=f$ with $u\in X$, the continuous function
$u\colon[0,T]\to H$ fulfils $u(t)\in V$ for a.e.\ $t\in\,]0,T[\,$, so the extension $A\in
\B(V,V^*)$ applies for a.e.\ $t$. Hence $Au(t)$ belongs to $L_2(0,T;V^*)$.

\subsection{Concerning Proposition~\ref{LiMa-prop}}

The existence and uniqueness statements in Proposition~\ref{LiMa-prop}  
are essentially special cases of the classical theory of Lions and Magenes, cf.\ \cite[Sect.~3.4.4]{LiMa72} on 
$t$-dependent $V$-elliptic forms $a(t;u,v)$. Indeed, because of the fixed final time
$T\in\,]0,\infty[\,$, their indicated extension to $V$-coercive forms works well here: since
$u\mapsto e^{\pm tk}u$ and $f\mapsto e^{\pm tk}$ are all bijections on $L_2(0,T;V)$ and $L_2(0,T;V^*)$,
respectively, the auxiliary problem $v'+(A+kI)v=e^{-kt}f$, $v(0)=u_0$ has a solution $v\in X$ according to the
statement for the $V$-elliptic operator $A+kI$ in \cite[Sect.~3.4.4]{LiMa72}, when $k$ is
the coercivity constant in \eqref{coerciv-id};
and since
multiplication by the scalar $e^{kt}$ commutes with $A$ for each $t$, it follows from the Leibniz rule
in $\cal{D}'(0,T;V^*)$ that the function $u(t)=e^{kt}v(t)$
is in $X$ and satisfies 
\begin{equation}
  u'+Au=f,\qquad u(0)=u_0.  
\end{equation}
Moreover, the uniqueness of a solution $u\in X$ follows from that of $v$, for if $u'+Au=0$,
$u(0)=0$, then it is seen at once that $v=e^{-kt}u$ solves $v'+(A+kI)v=0$, $v(0)=0$; so that
$v\equiv0$, hence $u\equiv0$.

In the $V$-elliptic case, the  well-posedness in Proposition~\ref{LiMa-prop} is a known corollary to the
proofs in \cite{LiMa72}. For coercive $A$, the above exponential
factors should also be handled, which can be done explicitly using

\begin{lemma}[Gr{\"o}nwall] \label{Gronwall-lem}
  When $\varphi$, $k$ and $E$ are positive Borel functions on $[0,T]$, and $E(t)$ is
  increasing, then validity on $[0,T]$ of the first of the following inequalities implies that of the
  second:
  \begin{equation}
    \varphi(t)\le E(t)+\int_0^t k(s)\varphi(s)\,ds\le E(t)\cdot\exp(\int_0^t k(s)\,ds).
  \end{equation}
\end{lemma}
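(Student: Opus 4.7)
The plan is to reduce the integral inequality to a linear differential inequality for the cumulative integral and then apply an integrating factor. Concretely, introduce the auxiliary function $R(t)=\int_0^t k(s)\varphi(s)\,ds$, so that the hypothesis reads $\varphi(t)\le E(t)+R(t)$. Since $k\varphi$ must be locally integrable for the hypothesis to be meaningful, Lebesgue's differentiation theorem yields that $R$ is absolutely continuous with $R'(t)=k(t)\varphi(t)$ almost everywhere, and inserting the hypothesis gives
\begin{equation*}
R'(t)\le k(t)E(t)+k(t)R(t)\qquad \text{for a.e.\ } t\in[0,T].
\end{equation*}

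Next I would introduce the integrating factor $e^{-K(t)}$, where $K(t)=\int_0^t k(s)\,ds$, so that the inequality is recast as
\begin{equation*}
\frac{d}{dt}\bigl(e^{-K(t)}R(t)\bigr)\le e^{-K(t)}k(t)E(t)\qquad \text{for a.e.\ } t.
\end{equation*}
Since $e^{-K(\cdot)}R(\cdot)$ is absolutely continuous and vanishes at $t=0$, integration from $0$ to $t$ combined with the monotonicity of $E$ gives
\begin{equation*}
e^{-K(t)}R(t)\le E(t)\int_0^t e^{-K(s)}k(s)\,ds=E(t)\bigl(1-e^{-K(t)}\bigr),
\end{equation*}
which upon multiplying by $e^{K(t)}$ becomes $R(t)\le E(t)(e^{K(t)}-1)$. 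Substituting back into $\varphi(t)\le E(t)+R(t)$ produces the desired conclusion $\varphi(t)\le E(t)\exp(K(t))$.

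The only genuine subtlety is the measure-theoretic justification: since $\varphi$ is only assumed Borel and not necessarily continuous, the step $R'=k\varphi$ a.e.\ depends on the fundamental theorem of calculus for Lebesgue integrals, and the manipulation of $(e^{-K}R)'$ must be carried out in the sense of absolutely continuous functions. This is routine but deserves a brief mention, because the alternative of iterating the hypothesis (substituting the inequality into itself to generate the exponential series) is cleaner conceptually but requires additional bounds on $\varphi$ to control the remainder. I would therefore favor the integrating-factor route as both concise and rigorous in the Borel setting.
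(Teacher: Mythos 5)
Your argument is correct, and it is the standard integrating-factor proof: setting $R(t)=\int_0^t k\varphi$, deriving $(e^{-K}R)'\le k E e^{-K}$ a.e., and integrating with the monotonicity of $E$ gives $R(t)\le E(t)(e^{K(t)}-1)$. Note that the paper does not prove the lemma at all; it only refers to Lemma 6.3.6 of H\"ormander's \emph{Lectures on nonlinear hyperbolic differential equations}, so there is no in-paper argument to compare against. Two small points. First, the lemma as stated asserts the \emph{sharper} inequality $E(t)+\int_0^t k\varphi\le E(t)\exp(\int_0^t k)$, i.e.\ a bound on the middle quantity and not merely on $\varphi$; your estimate $R(t)\le E(t)(e^{K(t)}-1)$ delivers exactly this, but your closing sentence only records the weaker consequence $\varphi(t)\le E(t)e^{K(t)}$ --- you should state the conclusion you actually proved, since that is what the lemma claims (and what the paper calls the ``slightly sharper statement''). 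Second, your remark about local integrability is not merely cosmetic: for positive Borel functions the integral $\int_0^t k\varphi$ is always defined in $[0,\infty]$, and if it were $+\infty$ the asserted second inequality could genuinely fail, so the implicit hypothesis that $k$ and $k\varphi$ are locally integrable is needed for the statement itself, not just for your method; it is worth saying this explicitly rather than folding it into ``meaningful''. With those two sentences added, the proof is complete and rigorous in the stated Borel setting.
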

The reader is referred to the proof of Lemma 6.3.6 in \cite{H97}, which actually covers the
slightly sharper statement above.
Using this, one finds in a classical way a detailed estimate on each subinterval $[0,t]$:

\begin{proposition}  \label{pest-prop}
The unique solution $u\in X$ of \eqref{ivA-intro}, cf.\
Proposition~\ref{LiMa-prop}, fulfils in terms of the boundedness and coercivity constants $C_3$, $C_4$
and $k$ of $a(\cdot,\cdot)$ that for $0\leq t\leq T$,
\begin{equation} \label{u-est}
  \begin{split}
  \int_0^t \|u(s)\|^2 \,ds  +\sup_{0\le s\le t}|u(s)|^2 &+  \int_0^t \|u'(s) \|_{*}^2 \,ds 
\\
  &\leq (2+ \frac{2C_3^2+C_4+1}{C_4^2}e^{2kt})\big(C_4|u_0|^2 + \int_0^t\|f(s)\|_{*}^2\,ds\big).
 \end{split}
\end{equation}
For $t=T$, this entails boundedness 
$ L_2(0,T;V^*)\oplus H\to X $
of the solution operator $(f,u_0)\mapsto u$.
\end{proposition}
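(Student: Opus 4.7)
The plan is to derive an energy identity by pairing $u' + Au = f$ with $u$ itself in the $V^*$--$V$ duality, and then to iterate Lemma~\ref{Gronwall-lem} and Young's inequality to bound the three ingredients of $\|u\|_X^2$ separately. First I would invoke the classical chain rule available for $u \in L_2(0,T;V) \cap H^1(0,T;V^*)$, namely that $t \mapsto |u(t)|^2$ is absolutely continuous on $[0,T]$ with $\frac{d}{dt}|u(t)|^2 = 2\Re\dual{u'(t)}{u(t)}$ a.e.\ (the same ingredient behind the embedding \eqref{Sobolev-ineq}). Combined with the coercivity bound $\Re\, a(u,u) \ge C_4\|u\|^2 - k|u|^2$ and $|\dual{f}{u}| \le \|f\|_*\|u\|$, this yields the pointwise differential inequality
\begin{equation*}
  \tfrac{d}{dt}|u(t)|^2 + 2C_4\|u(t)\|^2 \le 2k|u(t)|^2 + 2\|f(t)\|_*\|u(t)\|.
\end{equation*}

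Next I would apply Young's inequality to absorb $2\|f\|_*\|u\|$ as $C_4\|u\|^2 + C_4^{-1}\|f\|_*^2$, and integrate on $[0,t]$ to obtain
\begin{equation*}
  |u(t)|^2 + C_4\int_0^t\|u(s)\|^2\,ds \le E(t) + 2k\int_0^t |u(s)|^2\,ds,
\end{equation*}
where $E(t) = |u_0|^2 + C_4^{-1}\int_0^t\|f(s)\|_*^2\,ds$ is increasing. Dropping the positive dissipation term puts the remainder in the form required by Lemma~\ref{Gronwall-lem}, whose conclusion $|u(t)|^2 \le E(t)e^{2kt}$ provides the $\sup|u|^2$ bound at once. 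Feeding this pointwise bound back into the integral inequality (and using $\int_0^t E(s)e^{2ks}\,ds \le E(t)(e^{2kt}-1)/(2k)$ since $E$ is increasing) delivers $C_4\int_0^t\|u(s)\|^2\,ds \le E(t)e^{2kt}$. The equation itself, together with $\|A\|_{\B(V,V^*)} \le C_3$, gives $\|u'(s)\|_* \le \|f(s)\|_* + C_3\|u(s)\|$; squaring, integrating and inserting the $L_2(0,T;V)$-bound just obtained converts this into an estimate with the announced $2C_3^2$ coefficient. Summing the three contributions and rewriting $E(t) = C_4^{-1}(C_4|u_0|^2 + \int_0^t \|f\|_*^2\,ds)$ should exactly reconstitute the constant $2 + (2C_3^2 + C_4 + 1)C_4^{-2}e^{2kt}$ in \eqref{u-est}. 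Taking $t = T$ and recalling that $\|\cdot\|_* \le C_2\|\cdot\|$ then yields the asserted continuity of the solution operator $L_2(0,T;V^*) \oplus H \to X$.

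The only genuinely delicate step I anticipate is the rigorous justification of the chain-rule identity $\frac{d}{dt}|u|^2 = 2\Re\dual{u'}{u}$: since $u'$ lives only in $V^*$, the expression $|u(t)|^2$ cannot be differentiated naively, and the identity must either be read in $\cal{D}'(0,T)$ or obtained by time-mollification of $u$, combined with the embedding into $C([0,T];H)$. Everything after that point is careful bookkeeping with Young's inequality and Grönwall, and the coefficients line up without further surprise.
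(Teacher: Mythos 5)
Your proposal is correct and follows essentially the same route as the paper's proof: the energy identity $\partial_t|u|^2 = 2\Re\dual{\partial_t u}{u}$ (justified by regularisation, as in \cite[Lem.\ III.1.2]{Tem84}), Young's inequality, Gr{\"o}nwall's lemma for the $\sup|u|^2$ bound, feeding that back for the $L_2(0,T;V)$ bound, and the equation with $\|A\|_{\B(V,V^*)}\le C_3$ for the $\|u'\|_*$ bound, with the constants assembling exactly as in \eqref{u-est}.
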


\begin{proof}
As $u \in L_2(0,T;V)$, while $f$ and $A u$ and hence also $u'=f-Au$ belong to the dual space
$L_2(0,T;V^*)$, one has  in $L_1(0,T)$ the identity 
\begin{equation}
  \Re\dual{\partial_t u}{u} + \Re a(u,u) = \Re\dual{f}{u}.  
\end{equation}
Here a classical regularisation yields $\partial_t |u|^2=2\Re\dual{\partial_t u}{u}$, cf.\
\cite[Lem.\ III.1.2]{Tem84} or \cite[Lem.\ 2]{ChJo18ax}, 
so by Young's inequality and the $V$-coercivity,
\begin{align}
  \partial_t |u|^2 +  2(C_4 \|u\|^2 -k|u|^2)\leq 2 |\dual{f}{u}| \leq C_4^{-1} \|f\|_{*}^2 + C_4 \|u\|^2.
\end{align}
Integration of this yields,
since $|u|^2$ and $\partial_t |u|^2= 2\Re\dual{\partial_t u}{u}$ are in $L_1(0,T)$, 
\begin{align}  \label{VH-est}
  |u(t)|^2 + C_4 \int_0^t \|u(s)\|^2 \,ds  \leq |u_0|^2 + C_4^ {-1}\int_0^t\|f(s)\|_{*}^2\,ds +2k\int_0^t |u(s)|^2\,ds.
\end{align}
Ignoring the second term on the left-hand side, it follows from Lemma~\ref{Gronwall-lem} that, for
$0\le t\le T$, 
\begin{align} \label{H-est}
  |u(t)|^2 \leq \Big(|u_0|^2 + C_4^ {-1}\int_0^t\|f(s)\|_{*}^2\,ds\Big)\cdot\exp(2kt);
\end{align}
and since the right-hand side is increasing, one even has 
\begin{align} \label{H-est'}
 \sup_{0\le s\le t} |u(s)|^2 \leq \Big(|u_0|^2 + C_4^ {-1}\int_0^t\|f(s)\|_{*}^2\,ds\Big)\cdot\exp(2kt).
\end{align}

In addition it follows in a crude way, from \eqref{VH-est} and an integrated version of \eqref{H-est},  that 
\begin{equation} \label{V-est}
  \begin{split}
  C_4\int_0^t \|u(s)\|^2 \,ds  &\leq 
  \big(|u_0|^2 + C_4^ {-1}\int_0^t\|f(s)\|_{*}^2\,ds\big) 
  \big(1+\int_0^t (e^{2ks})'\,ds)
\\
  &= e^{2kt}\big(|u_0|^2 + C_4^ {-1}\int_0^t\|f(s)\|_{*}^2\,ds\big).
 \end{split}
\end{equation}

Moreover, as $u$ solves \eqref{ivA-intro}, it is clear that
$\|\partial_t u \|_{*}^2 \leq  (\|f\|_{*} + \|A u \|_{*})^2\leq  2\|f\|_{*}^2 + 2\|A u \|_{*}^2$,
and since $\|A\|\leq C_3$ holds for the norm in $\B(V,V^*)$, the above estimates entail
\begin{equation}  \label{V*-est}
  \begin{split}
  \int_0^t \|u'(s) \|_{*}^2 \,ds 
&\leq 2 \int_0^t \|f(s)\|_{*}^2 \,ds + 2 C_3^2  \int_0^t \|u(s)\|^2 \,ds 
\\
  &\leq 2(C_4+ \frac{C_3^2}{C_4} e^{2kt})\big(|u_0|^2 + C_4^ {-1}\int_0^t\|f(s)\|_{*}^2\,ds\big).
\end{split}  
\end{equation}
Finally the stated estimate \eqref{u-est} follows from \eqref{H-est'}, \eqref{V-est} and \eqref{V*-est}.
\end{proof}

\subsection{On the proof of the Duhamel formula}

As a preparation, a small technical result is recalled from Proposition 3 in \cite{ChJo18ax}, where a
detailed proof can be found:

\begin{lemma}  \label{Leibniz-lem}
  When $\gen$ generates an analytic semigroup on the complex Banach space $B$ and $w\in
  H^1(0,T;B)$, then the Leibniz rule
  \begin{equation}
    \partial_t e^{(T-t)\gen}w(t)= (-\gen)e^{(T-t)\gen}w(t)+e^{(T-t)\gen}\partial_t w(t)
  \end{equation}
  is valid in $\cal{D}'(0,T;B)$.
\end{lemma}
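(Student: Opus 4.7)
The plan is a density reduction: prove the Leibniz formula pointwise for smooth $w$, then approximate a general $w\in H^1(0,T;B)$ by smooth functions and pass to the limit in $\cal{D}'(0,T;B)$. The crucial observation enabling the limit is that a test function $\varphi\in C_0^\infty(]0,T[)$ has support compactly contained in $]0,T[$, so the otherwise singular operator norm $\|\gen e^{(T-t)\gen}\|_{\B(B)}$ stays uniformly bounded on $\operatorname{supp}\varphi$.

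For the smooth case, take $w\in C^1([0,T];B)$. By Proposition~\ref{Pazy'-prop} the semigroup is $\B(B)$-differentiable for positive times with $(e^{s\gen})'=\gen e^{s\gen}$, so the chain rule gives $\partial_t e^{(T-t)\gen}=-\gen e^{(T-t)\gen}$ in $\B(B)$ for $t<T$. Combined with the Banach-valued product rule, this yields the claimed pointwise identity on $[0,T[$. Testing against $\varphi\in C_0^\infty(]0,T[)$ and integrating by parts then gives the formula as a distributional identity for such $w$.

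For general $w\in H^1(0,T;B)$, I would select $w_n\in C^\infty([0,T];B)$ with $w_n\to w$ in $H^1(0,T;B)$ (standard density in vector-valued Sobolev spaces, obtained for instance by extension by reflection followed by mollification). Applying the smooth identity to each $w_n$ and integrating against $\varphi\in C_0^\infty(]0,T[)$, all three terms pass to the limit: the outer term $\int_0^T e^{(T-t)\gen}w_n(t)\varphi'(t)\,dt$ and the summand $\int_0^T e^{(T-t)\gen}w_n'(t)\varphi(t)\,dt$ converge by the uniform bound $\|e^{(T-t)\gen}\|_{\B(B)}\le Me^{\omega T}$ on all of $[0,T]$, while $\int_0^T \gen e^{(T-t)\gen}w_n(t)\varphi(t)\,dt$ converges because $\sup_{t\in\operatorname{supp}\varphi}\|\gen e^{(T-t)\gen}\|_{\B(B)}<\infty$ by Proposition~\ref{Pazy'-prop}. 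The result is the Leibniz identity in $\cal{D}'(0,T;B)$ for the given $w$.

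The main obstacle is precisely the blow-up of $\|\gen e^{(T-t)\gen}\|_{\B(B)}$ as $t\to T$, which in general prevents the right-hand side from being globally in $L_2(0,T;B)$; confining the pairing to $\operatorname{supp}\varphi\Subset\,]0,T[$ is the clean way to sidestep this, and makes the approximation step routine.
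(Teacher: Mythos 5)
Your argument is correct and complete. Note first that the paper does not actually prove this lemma: it only recalls it from Proposition~3 of the reference \cite{ChJo18ax}, so there is no in-text proof to compare against; your proposal therefore supplies a self-contained argument where the paper relies on a citation. The route you take is the natural one, and each step holds up: the pointwise product rule for $C^1$ data is legitimate on $[0,T[$ because Proposition~\ref{Pazy'-prop} gives $\B(B)$-differentiability of $s\mapsto e^{s\gen}$ only for $s=T-t>0$, and restricting to the open interval is exactly what the $\cal{D}'(0,T;B)$ formulation permits; the density of $C^\infty([0,T];B)$ in $H^1(0,T;B)$ by reflection and mollification is standard for Bochner--Sobolev spaces on an interval; and you correctly isolate the only delicate point, namely that $\|\gen e^{(T-t)\gen}\|_{\B(B)}={\cal O}\bigl((T-t)^{-1}\bigr)$ blows up as $t\to T$, so the term $\gen e^{(T-t)\gen}w(t)$ is only locally integrable on $]0,T[$ --- which is all that is needed, since $\operatorname{supp}\varphi$ is a compact subset of $]0,T[$ on which the second estimate in \eqref{eta-est} gives a uniform bound. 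The two bounded terms pass to the limit by $\|e^{(T-t)\gen}\|_{\B(B)}\le Me^{\omega T}$ together with $w_n\to w$ and $w_n'\to w'$ in $L_2(0,T;B)$, exactly as you say. The only cosmetic remark is that you could state explicitly that $e^{(T-t)\gen}$ maps $B$ into $D(\gen)$ for $t<T$ (so that $\gen e^{(T-t)\gen}w(t)$ is defined for a.e.\ $t$ and strongly measurable), but this is immediate from analyticity and does not affect the validity of the proof.
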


In Proposition~\ref{Duhamel-prop}, equation \eqref{u-id}
is of course just the Duhamel formula from analytic semigroup theory. However, since
$X$ also contains non-classical solutions, \eqref{u-id} requires a proof in the present
context---but as noted, it suffices just to reinforce the
classical argument by the injectivity of $e^{-tA}$ in Proposition~\ref{inj-prop}:

\begin{proof}[Proof of Proposition~\ref{Duhamel-prop}]
To address the last statement first, once \eqref{u-id} has been shown, Proposition~\ref{LiMa-prop} yields 
$e^{-tA}u_0\in X$ for $f=0$. For general $(f,u_0)$ one has  $u\in X$, so the last term
containing $f$ also belongs to $X$.

To obtain \eqref{u-id} in the above set-up, note that all terms in 
$\partial_t u+A u=f$ are in $L_2(0,T;V^*)$. Therefore  $e^{-(T-t)A}$
applies for a.e.\ $t\in[0,T]$ to both sides as an integration factor, so as an identity in
$L_2(0,T;V^*)$,
\begin{equation}
  \partial_t(e^{-(T-t)A}u(t))=e^{-(T-t)A}\partial_t u(t)+ e^{-(T-t)A}A u(t)=e^{-(T-t)A}f(t).
\end{equation}
Indeed, on the left-hand side $e^{-(T-t)A}u(t)$ is in $L_1(0,T;V^*)$ and 
its derivative in ${\cal D}'(0,T;V^*)$ follows the Leibniz rule in Lemma~\ref{Leibniz-lem}, since $u\in
H^1(0,T; V^*)$ as a member of $X$.

As $C([0,T];H)\subset L_2(0,T;V^*)\subset L_1(0,T;V^*)$, it is seen in the above that 
$e^{-(T-t)A}u(t)$ and $e^{-(T-t)A}f(t)$ both belong to $L_1(0,T;V^*)$. 
So when the Fundamental Theorem for vector functions 
(cf.\ \cite[Lem.\ III.1.1]{Tem84}, or \cite[Lem.\ 1]{ChJo18ax}) is applied and followed by use of the semigroup property and a 
commutation of $e^{-(T-t)A}$ with the integral, using Bochner's identity, cf.\ Remark~\ref{Bochner-rem} below, one finds that 
\begin{equation} \label{eq:identityT}
  \begin{split}
    e^{-(T-t)A}u(t)&=e^{-TA}u_0+\int_0^t e^{-(T-s)A}f(s)\,ds
\\
  &=e^{-(T-t)A}e^{-tA}u_0+e^{-(T-t)A}\int_0^t e^{-(t-s)A}f(s)\,ds. 
  \end{split}
\end{equation}
Since $e^{-(T-t)A}$ is linear and injective, cf.~Proposition~\ref{inj-prop}, equation \eqref{u-id} now results at once. 
\end{proof}

\begin{remark} \label{Bochner-rem}
  It is recalled that for $f\in L_1(0,T; B)$, where $B$ is a Banach space, it is a basic property 
  that for every functional $\varphi$ in the dual space $B'$, one  has Bochner's identity:
  $\dual{\int_0^T f(t)\,dt}{\varphi}= \int_0^T\dual{ f(t)}{\varphi}\,dt$.
\end{remark}

\subsection{Concerning Theorem~\ref{intro-thm}}

As all terms in \eqref{u-id} are in $C([0,T];H)$, it is safe to evaluate at $t=T$, which in view of
\eqref{yf-eq} gives that
$u(T)=e^{-TA}u(0)+y_f$. This is the flow map 
\begin{equation} \label{flow-id}
  u(0)\mapsto u(T).  
\end{equation}
Owing to the injectivity of $e^{-TA}$ once again, and that Duhamel's formula implies
$u(T)-y_f=e^{-TA}u(0)$, which clearly belongs to $D(e^{TA})$, this flow is inverted  by
\begin{equation}  \label{u0uT-id}
  u(0)=e^{TA}(u(T)-y_f).  
\end{equation}
In other words, not only are the solutions in $X$ to $u'+Au=f$ parametrised by the initial
states $u(0)$ in $H$ (for fixed $f$) according to Proposition~\ref{LiMa-prop}, but also the final
states $u(T)$ are parametrised by the $u(0)$. Departing from this observation, one may give an intuitive

\begin{proof}[Proof of Theorem~\ref{intro-thm}]
If \eqref{fvA-intro} is solved by $u \in X$, then $u(T)=u_T$ is reached from
the unique initial state $u(0)$ in \eqref{u0uT-id}. But the argument  for \eqref{u0uT-id}
showed that $u_T-y_f = e^{-TA} u(0)\in D(e^{TA})$, 
so \eqref{eq:cc-intro} is necessary. 

Given data $(f,u_T)$ that fulfill \eqref{eq:cc-intro},
then $u_0 = e^{TA}(u_T - y_f)$ is a well-defined vector in $H$, so 
Proposition~\ref{LiMa-prop} yields a
function $u\in X$ solving $u' +Au = f$ and $u(0)=u_0$. 
By the flow \eqref{flow-id},
this $u(t)$ has final state $u(T)=e^{-TA}e^{TA}(u_T-y_f)+y_f=u_T$, hence satisfies both equations in \eqref{fvA-intro}.
Thus \eqref{eq:cc-intro} suffices for solvability.

In the affirmative case, \eqref{eq:fvp_solution} results for any solution $u\in X$ by inserting
formula \eqref{u0uT-id} for $u(0)$ into \eqref{u-id}. 
Uniqueness of $u$ in $X$ is seen from the  right-hand side of \eqref{eq:fvp_solution}, where all
terms depend only on the given $f$, $u_T$, $A$ and $T>0$. 
That each term in \eqref{eq:fvp_solution}
is a function belonging to $X$ was seen in Proposition~\ref{Duhamel-prop}.

Moreover, the solution can be estimated in $X$ by substituting the expression \eqref{u0uT-id} for $u_0$
into the inequality in Proposition~\ref{pest-prop} for $t=T$.  For the norm in \eqref{Xnorm-id} this gives 
\begin{equation}
  \begin{split}
 \vvvert u\vvvert_X^2 &\leq (2+ \frac{2C_3^2+C_4+1}{C_4^2}e^{2kT})\max(C_4,1)\big(|u_0|^2 + \int_0^T\|f(s)\|_{*}^2\,ds\big)
\\
  &\le c(|e^{TA}(u_T-y_f)|^2+\|f\|_{L_2(0,T;V^*)}^2).
 \end{split}
\end{equation}
Here one may add $|u_T|^2$ on the right-hand side to arrive at the expression for $\|(f,u_T)\|_Y$  in Theorem 1.
\end{proof}

\begin{remark}
  It is easy to see from the definitions and proofs that $\cal{P}u=(\partial_t u+ Au, u(T))$ is a
bounded operator $X\to Y$.
The statement in Theorem~\ref{intro-thm} means that the solution operator $\cal{R}(f,u_T)=u$
(is well defined and) satisfies $\cal{P}\cal{R}=I$, but by the uniqueness also $\cal{R}\cal{P}=I$
holds. Hence $\cal{R}$ is a linear homeomorphism $Y\to X$.
\end{remark}

\section{The heat problem with the Neumann condition}
  \label{Neu-sect}

In the sequel $\Omega$ stands for a $C^\infty$ smooth, open bounded set in $\Rn$,
$n\ge2$ as described in \cite[App.~C]{G09}. In particular $\Omega$  is
locally on one side of its boundary $\Gamma=\partial \Omega$.
For such $\Omega$, the problem is to characterise the $u(t,x)$ satisfying
\begin{equation}  \label{heatN_fvp}
\left.
\begin{aligned}
  \partial_tu(t,x) -\Delta u(t,x) &= f(t,x) &&\text{ in } \, ]0,T[ \times \Omega 
\\
  \gamma_1 u(t,x) &= 0 && \text{ on } \, ]0,T[\, \times \Gamma
\\
   r_T u(x) &= u_T(x) && \text{ at } \left\{ T \right\} \times \Omega
\end{aligned}
\right\}
\end{equation}
While $r_Tu(x)=u(T,x)$,
the Neumann trace on $\Gamma$ is written in the operator notation $\gamma_1u=
(\nu\cdot\nabla u)|_{\Gamma}$, whereby $\nu$ is the outward pointing normal vector at $x\in\Gamma$. 
Similarly $\gamma_1$ is used for traces on $\, ]0,T[\, \times \Gamma$.

Moreover, $H^m(\Omega)$ denotes the usual Sobolev space that is normed by $\|u\|_m =
\big(\sum_{|\alpha|\le m}\int_\Omega |\partial^\alpha u|^2\,dx\big)^{1/2}$, which up to equivalent
norms equals the space $H^{m}(\overline{\Omega})$ of
restrictions to $\Omega$ of $H^{m}(\Rn)$ endowed with the infimum norm.  

Correspondingly the dual
of e.g.\ $H^1(\overline{\Omega})$ has an identification with the closed subspace of $H^{-1}(\Rn)$ given by the support condition in  
\begin{equation}
  H^{-1}_0(\overline{\Omega})=\Set{ u\in H^{-1}(\Rn)}{\operatorname{supp} u\subset \overline{\Omega}}.
\end{equation}
For these matters the reader is referred to \cite[App.\ B.2]{H}.
Chapter 6 and (9.25) in \cite{G09} could also be references for this and basic
facts on boundary value problems; cf.\ also \cite{Eva10, Rau91}.

The main result in Theorem~\ref{intro-thm} applies to \eqref{heatN_fvp} for 
$V= H^1(\overline\Omega)$,  $H = L_2(\Omega)$ and $V^* \simeq H^{-1}_0(\overline{\Omega})$, for
which there are inclusions
$H^1(\overline\Omega)\subset L_2(\Omega)\subset H^{-1}_0(\overline{\Omega})$, when $g\in
L_2(\Omega)$ via $e_\Omega$ (extension by zero outside of $\Omega$) is identified with
$e_\Omega g$ belonging to $H^{-1}_0(\overline{\Omega})$.
The Dirichlet form
\begin{align}  \label{sform-id}
  s(u,v) = \sum_{j=1}^n \scal{\partial_j u}{\partial_j v}_{L_2(\Omega)}
         = \sum_{j=1}^n \int_\Omega {\partial_j u}\overline{\partial_j v}\, dx 
\end{align}
satisfies $|s(v,w)|\le \|v\|_1\|w\|_1$, and the coercivity
in \eqref{coerciv-id} holds for $C_4=1$, $k=1$ since $s(v,v)=\|v\|_1^2-\|v\|_0^2$.

The induced Lax--Milgram operator is the Neumann realisation $\mlap_N$, which is selfadjoint due
to the symmetry of $s$ and has
its domain given by $D(\lap_N)=\Set{u\in H^2(\Omega)}{\gamma_1 u=0}$. 
This is a classical but non-trivial result (cf.\ the remarks prior to Theorem 4.28 in \cite{G09}, or Section
11.3 ff.\ there; or \cite{Rau91}).
Thus the homogeneous boundary condition is imposed via the condition $u(t)\in D(\lap_N)$ for $0<t<T$.

By the coercivity, $-A = \lap_N$ generates an analytic semigroup of injections
$e^{z\lap_N}$ in $\B(L_2(\Omega))$, and the bounded extension 
$\tilde\lap\colon H^{1}(\overline\Omega) \rightarrow H^{-1}_0(\overline{\Omega})$ 
induces the analytic semigroup $e^{z\tilde\lap}$ on $H^{-1}_0(\overline{\Omega})$; both
are defined for $z\in S_{\pi/4}$.
As previously, $(e^{t\lap_N})^{-1} = e^{-t\lap_N}$.

The action of $\tilde \lap$ is (slightly surprisingly) given by $\tilde\lap u=\dv(e_\Omega\grad u)$ for each $u\in
H^{1}(\overline\Omega)$, for when $w\in H^{1}(\Rn)$ coincides with
$v$ in $\Omega$, then \eqref{sform-id} gives
\begin{equation}
  \begin{split}
    \Dual{-\tilde\lap u}{v}=s(u,v)
   & = \sum_{j=1}^n \int_{\Rn} e_\Omega(\partial_j u)\cdot\overline{\partial_j w}\, dx
\\
   & =\sum_{j=1}^n\Dual{-\partial_j(e_\Omega\partial_j u)}{w}_{H^{-1}(\Rn)\times
    H^{1}(\Rn)}
\\
  &=\Dual{\sum_{j=1}^n-\partial_j(e_\Omega\partial_j u)}{v}_{H^{-1}_0(\overline{\Omega})\times H^{1}(\overline\Omega)}.
  \end{split}
\end{equation}
To make a further identification one may recall the formula $\partial_j(u\chi_\Omega)=(\partial_j
u)\chi_\Omega-\nu_j(\gamma_0u)dS$, valid for $u\in C^{1}(\Rn)$ when $\chi_\Omega$
denotes the characteristic function of $\Omega$, and $\gamma_0$, $S$ the restriction to $\Gamma$ and the surface
measure at $\Gamma$, respectively; a proof is given in \cite[Thm.\ 3.1.9]{H}. 
Replacing $u$ by $\partial_j u$ for some $u\in C^{2}(\overline\Omega)$, and using that $\nu(x)$ is
a smooth vector field around $\Gamma$, we obtain that
$\partial_j(e_\Omega\partial_ju)=e_\Omega(\partial_j^{2} u)-(\gamma_0\nu_j\partial_j u)dS$.
This now extends to all
$u\in H^{2}(\overline\Omega)$ by density and continuity,  and by summation one finds that in $\cal{D}'(\Rn)$,
\begin{equation} \label{tlap-id}
  \tilde\lap u=\dv(e_\Omega\grad u)=e_\Omega(\lap u)-(\gamma_1 u)dS.
\end{equation}
Clearly the last term vanishes for $u\in D(\lap_N)$; whence $\dv(e_\Omega\grad u)$ identifies in $\Omega$
with the $L_2$-function $\lap u$ for such $u$. But for general $u$ in the form domain $H^{1}(\overline\Omega)$,
none of the terms on the right-hand side make sense.

The solution space for \eqref{heatN_fvp} amounts to
\begin{equation}
\begin{split}
  X_0 &= L_2(0,T;H^1(\Omega)) \bigcap C([0,T]; L_2(\Omega)) \bigcap H^1(0,T; H^{-1}_0(\overline{\Omega})),
   \label{X0-id}
\\
  \|u\|_{X_0}&= \big(\int_0^T\|u(t)\|^2_{H^{1}(\Omega)}\,dt
\\
  &\hphantom{= \big(\int_0^T\|u(t)\|^2}
                  +\sup_{t\in[0,T]}\int_\Omega |u(x,t)|^2\,dx+
                 \int_0^T\|\partial_t u(t)\|^2_{H^{-1}_0(\overline{\Omega})}\,dt  \Big)^{1/2}.
\end{split}
\end{equation}
The corresponding data space is here given in terms of $y_f=\int_0^T e^{(T-t)\lap}f(t)\,dt$, cf.\
\eqref{yf-eq}, as
\begin{equation}
\begin{split}
  Y_0&= \left\{ (f,u_T) \in L_2(0,T;H^{-1}_0(\overline{\Omega})) \oplus L_2(\Omega) \Bigm|
                  u_T - y_f \in D(e^{-T\lap_N}) \right\},
\label{Y0-id}
\\
 \| (f,u_T) \|_{Y_0}
  &= \Big(\int_0^T\|f(t)\|^2_{H^{-1}_0(\overline{\Omega})}\,dt
\\
  &\hphantom{= \Big(\int_0^T\|u(t)\|^2}+ \int_\Omega\big(|u_T(x)|^2+|e^{-T\lap_N}(u_T - y_f )(x)|^2\big)\,dx\Big)^{1/2}.
\end{split}
\end{equation}
With this framework, Theorem~\ref{intro-thm} at once gives the following new result
on a classical problem:

\begin{theorem}  \label{heatN-thm}
Let $A=\mlap_N$ be the Neumann realization of the Laplacian in $L_2(\Omega)$ and 
$-\tilde\lap=-\dv(e_\Omega\grad\cdot)$ its extension $H^{1}(\overline{\Omega})\to H^{-1}_0(\overline{\Omega})$.
When  $u_T \in L_2(\Omega)$ and $f \in L_2(0,T;H^{-1}_0(\overline{\Omega}))$, 
then there exists a solution $u\in X_0$ of 
\begin{equation}
  \partial_t u-\dv(e_\Omega\grad u)=f,\qquad  r_Tu=u_T
\end{equation}
if and only if the data $(f,u_T)$ are given in $Y_0$, i.e.\ if and only if
\begin{equation}  \label{heat-ccc}
  u_T - \int_0^T e^{(T-s)\tilde\lap}f(s) \,ds\quad \text{ belongs to }\quad D(e^{-T \lap_N})=R(e^{T\lap_N}). 
\end{equation}
In the affirmative case, $u$ is uniquely determined in $X_0$ and satisfies the estimate
$\|u\|_{X_0} \leq c \| (f,u_T) \|_{Y_0}$.
It is given by the formula, in which all terms belong to $X_0$,
\begin{equation} 
  u(t) = e^{t\lap_N}e^{-T\lap_N}\Big(u_T-\int_0^T e^{(T-t)\tilde\lap}f(t)\,dt\Big) + \int_0^t e^{(t-s)\tilde\lap}f(s) \,ds.
\end{equation}
Furthermore the difference  in \eqref{heat-ccc} equals 
$e^{T\lap_N}u(0)$ in $L_2(\Omega)$. 
\end{theorem}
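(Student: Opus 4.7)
The strategy is to verify that the concrete Neumann setting fits into the abstract framework of Theorem~\ref{intro-thm} and then simply read off the conclusions. I would take as Gelfand triple $(H,V,a)=(L_2(\Omega),H^1(\overline{\Omega}),s)$ with the sesquilinear form $s$ in \eqref{sform-id}. The embeddings $H^1(\overline{\Omega})\hookrightarrow L_2(\Omega)\hookrightarrow H^{-1}_0(\overline{\Omega})$ are continuous, dense, and compatible with the identifications described before the theorem. The form $s$ is bounded, since $|s(v,w)|\le \|v\|_1\|w\|_1$, and the identity $s(v,v)=\|v\|_1^2-\|v\|_0^2$ establishes the coercivity condition \eqref{coerciv-id} with $C_3=C_4=1$ and $k=1$. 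Hence the Lax--Milgram framework yields the unbounded operator $A=\mlap_N$ in $L_2(\Omega)$ with the classical domain $\{u\in H^2(\Omega)\mid \gamma_1 u=0\}$, together with the bounded extension $A\in\B(V,V^*)$ that by \eqref{tlap-id} coincides with $-\tilde\lap=-\dv(e_\Omega\grad\cdot)$.

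Next, I would invoke Remark~\ref{domain-rem} to conclude that $-A$ generates the analytic semigroup $e^{z\lap_N}\in\B(L_2(\Omega))$ on the sector $S_{\pi/4}$ (since $\operatorname{arccot}(C_3/C_4)=\pi/4$ here), as well as the extended semigroup $e^{z\tilde\lap}$ on $H^{-1}_0(\overline{\Omega})$. Proposition~\ref{inj-prop} then guarantees injectivity of both semigroups, so the closed inverse $e^{-T\lap_N}=(e^{T\lap_N})^{-1}$ is well defined with dense domain $R(e^{T\lap_N})$, which is Hilbertable under the graph norm.

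With these identifications in place, Theorem~\ref{intro-thm} translates term by term. The abstract solution space $X$ in \eqref{eq:X} becomes precisely $X_0$ in \eqref{X0-id}; the yield $y_f$ takes the stated form $\int_0^T e^{(T-t)\tilde\lap}f(t)\,dt$; and since $e^{TA}=e^{-T\lap_N}$, the compatibility condition \eqref{eq:cc-intro} reads as \eqref{heat-ccc} and the data space $Y$ coincides with $Y_0$ in \eqref{Y0-id}. Existence, uniqueness, the continuity estimate $\|u\|_{X_0}\le c\|(f,u_T)\|_{Y_0}$, the explicit solution formula \eqref{eq:fvp_solution}, and the identity $u_T-y_f=e^{T\lap_N}u(0)$ all follow directly from the conclusions of Theorem~\ref{intro-thm}.

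The main obstacle is not in the present theorem at all, but rather was already handled in the preceding exposition, namely the identification of the bounded extension $A\in\B(V,V^*)$ as $-\dv(e_\Omega\grad\cdot)$ via the jump formula $\partial_j(e_\Omega\partial_j u)=e_\Omega(\partial_j^2 u)-(\gamma_0\nu_j\partial_j u)dS$, which recasts an abstract antidual object as a concrete distributional divergence in $\cal{D}'(\Rn)$. Once this identification is accepted, the present proof is purely a matter of specialising the abstract result to the concrete triple.
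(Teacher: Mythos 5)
Your proposal is correct and follows the same route as the paper: the paper's own "proof" consists precisely of verifying that $(L_2(\Omega),H^1(\overline\Omega),s)$ is a Gelfand triple with a bounded, $V$-coercive form (constants $C_3=C_4=1$, $k=1$), identifying the Lax--Milgram operator as $\mlap_N$ and its extension as $-\dv(e_\Omega\grad\cdot)$ via the jump formula \eqref{tlap-id}, and then stating that Theorem~\ref{intro-thm} "at once gives" the result with $X=X_0$ and $Y=Y_0$. You have also correctly located where the real work lies, namely in the preceding identification of the antidual extension as a concrete distributional divergence.
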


Besides the fact that $\tilde\lap=\dv(e_\Omega\grad\cdot)$ appears in the differential equation
(instead of $\lap$), it is noteworthy that there is no information on the boundary condition.
However, there is at least one simple remedy for this, for it is well known in analytic
semigroup theory, cf.\ \cite[Thm.\ 4.2.3]{Paz83} and \cite[Cor.\ 4.3.3]{Paz83}, that if the source term $f(t)$ is valued in $H$
and satisfies a global condition of H{\"o}lder continuity, that is, for some $\sigma\in\,]0,1[\,$,
\begin{equation}
  \sup\Set{|f(t)-f(s)|\cdot|t-s|^{-\sigma}}{0\le s<t\le T}<\infty,
\end{equation}
then the integral in \eqref{u-id} takes values in $D(A)$ for $0<t< T$ and
$A\int_0^t e^{-(t-s)A}f(s)\,ds$ is continuous $\,]0,T[\,\to H$. 

When this is applied in the above framework, the additional H{\"o}lder continuity yields $u(t)\in
D(\lap_N)=\Set{u\in H^2(\Omega)}{\gamma_1 u=0}$ for $t>0$, so the homogeneous Neumann condition is
fulfilled and $\tilde\lap u$ identifies with $\lap u$, as noted after \eqref{tlap-id}.
Therefore one has the following novelty:

\begin{theorem}
  If $u_T\in L_2(\Omega)$ and $f\colon\,[0,T]\to L_2(\Omega)$ is H{\"o}lder continuous of some
  order $\sigma\in\,]0,1[\,$, and if $u_T-y_f$ fulfils the criterion \eqref{heat-ccc}, then the
  homogeneous Neumann heat conduction final value problem \eqref{heatN_fvp} has a uniquely
  determined solution $u$ in $X_0$, satisfying $u(t)\in \Set{u\in H^2(\Omega)}{\gamma_1 u=0}$ for
  $t>0$, and depending continuously on $(f,u_T)$ in $Y_0$. Hence the problem is well posed in the
  sense of Hadamard. 
\end{theorem}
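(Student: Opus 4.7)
The plan is to deduce the statement from Theorem~\ref{heatN-thm} together with the H\"older-continuity smoothing result of Pazy mentioned in the paragraph following it, and then to identify $\tilde\lap$ with $\lap$ on $D(\lap_N)$ via~\eqref{tlap-id}.

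First, I would verify that the hypotheses of Theorem~\ref{heatN-thm} are in force: since $f\colon[0,T]\to L_2(\Omega)$ is H\"older continuous it is in particular bounded, hence in $L_2(0,T;L_2(\Omega))$, and the continuous embedding $L_2(\Omega)\hookrightarrow H^{-1}_0(\overline{\Omega})$ yields $f\in L_2(0,T;H^{-1}_0(\overline{\Omega}))$. The assumed compatibility on $u_T-y_f$ is exactly \eqref{heat-ccc}, so Theorem~\ref{heatN-thm} supplies a unique $u\in X_0$ solving $\partial_t u-\dv(e_\Omega\grad u)=f$ with $r_Tu=u_T$, the explicit representation formula, and the estimate $\|u\|_{X_0}\le c\|(f,u_T)\|_{Y_0}$.

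Second, I would upgrade the regularity of $u(t)$ for $t>0$ by treating the two summands of the formula separately. Since the compatibility condition places $v_0:=e^{-T\lap_N}(u_T-y_f)$ in $L_2(\Omega)$, analyticity of the semigroup $e^{t\lap_N}$ on $L_2(\Omega)$ smooths this vector into $\bigcap_{n\in\N}D(\lap_N^n)\subset D(\lap_N)$ for every $t>0$, compare \eqref{DAn-cnd}. For the Duhamel integral, I would first observe that on $L_2$-valued arguments the extended semigroup $e^{(t-s)\tilde\lap}$ coincides with $e^{(t-s)\lap_N}$, whence the integral equals $\int_0^t e^{(t-s)\lap_N}f(s)\,ds$; the global H\"older continuity of $f$ into $H=L_2(\Omega)$ then triggers the cited results \cite[Thm.\ 4.2.3 and Cor.\ 4.3.3]{Paz83}, placing this integral in $D(\lap_N)$ for every $t\in\,]0,T[\,$. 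Consequently $u(t)\in D(\lap_N)=\Set{v\in H^2(\Omega)}{\gamma_1 v=0}$ for $t\in\,]0,T[\,$, which simultaneously supplies the $H^2$ regularity and the homogeneous Neumann trace.

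Third, I would invoke the identification after \eqref{tlap-id} stating that $\dv(e_\Omega\grad u(t))=\lap u(t)$ whenever $u(t)\in D(\lap_N)$, to rewrite the equation delivered by Theorem~\ref{heatN-thm} as $\partial_t u-\lap u=f$ on $\,]0,T[\,\times\Omega$; together with $\gamma_1 u=0$ and $r_Tu=u_T$, this yields a genuine solution of \eqref{heatN_fvp}. Well-posedness in the sense of Hadamard then follows by combining existence, uniqueness in $X_0$, and the continuous dependence $Y_0\to X_0$ already given by Theorem~\ref{heatN-thm}.

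The main obstacle I anticipate is bookkeeping rather than deep analysis: one must keep the two semigroups $e^{t\lap_N}$ on $L_2(\Omega)$ and $e^{t\tilde\lap}$ on $H^{-1}_0(\overline{\Omega})$ carefully aligned on the common invariant subspace $L_2(\Omega)$, so that Pazy's H\"older regularity result---whose natural setting is an $H$-valued source---can be imported into the $V^*$-valued framework of Theorem~\ref{heatN-thm} to conclude that the Duhamel integral takes values in $D(\lap_N)$, and not merely in the much larger form domain $H^1(\overline{\Omega})$.
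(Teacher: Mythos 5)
Your proposal is correct and follows essentially the same route as the paper: apply Theorem~\ref{heatN-thm} to obtain the unique solution in $X_0$ with continuous dependence, use the cited Pazy results on H\"older continuous $H$-valued sources (together with analytic smoothing of the first term) to place $u(t)$ in $D(\lap_N)$ for $t>0$, and then invoke the identification after \eqref{tlap-id} to recover the genuine Neumann heat problem. Your explicit attention to aligning $e^{t\lap_N}$ with $e^{t\tilde\lap}$ on $L_2(\Omega)$ is a bookkeeping point the paper leaves implicit, but it does not change the argument.
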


It would be desirable, of course, to show the well-posedness in a strong form, with an isomorphism
between the data and solution spaces.
\section{Final remarks}

\begin{remark} \label{GS-rem}
Grubb and Solonnikov~\cite{GrSo90} systematically treated
a large class of \emph{initial}-boundary problems of parabolic pseudo-differential equations
and worked out compatibility conditions characterising
well-posedness in full scales of anisotropic $L_2$-Sobolev spaces
(such conditions have a long history in the differential operator case, going back at least to work
of Lions and Magenes \cite{LiMa72} and Ladyzenskaya, Solonnikov and Ural'ceva \cite{LaSoUr68}).
Their conditions are explicit
and local at the curved corner $\Gamma\times\{0\}$, except for 
half-integer values of the smoothness $s$ that were shown to require so-called coincidence, which 
is expressed in integrals over the Cartesian product of the two boundaries $\{0\}\times\Omega$ and
$\,]0,T[\,\times\,\Gamma$; hence coincidence is also a non-local condition.  
Whilst the conditions of Grubb and Solonnikov are decisive for the solution's regularity, 
condition \eqref{eq:cc-intro} in Theorem~\ref{intro-thm} is in comparison crucial for the
\emph{existence} question. 
\end{remark}

\begin{remark}
Injectivity of the linear map $u(0)\mapsto u(T)$ for the homogeneneous equation $u'+Au=0$,
or equivalently its backwards uniqueness, was proved much earlier for problems with $t$-dependent
sesquilinear forms $a(t;u,v)$ by Lions and Malgrange~\cite{LiMl60}. In addition to some
$C^1$-regularity properties in $t$, they assumed that (the principal part of) $a(t;u,v)$ is
symmetric and uniformly $V$-coercive in the sense that $a(t;v,v)+\lambda\|v\|^2\ge \alpha\|v\|^2$
for certain fixed  $\lambda\in\R$, $\alpha>0$ and all $v\in V$. In Problem~3.4 of \cite{LiMl60}, they
asked whether backward uniqueness can be shown without assuming symmetry (i.e., for
non-selfadjoint operators $A(t)$ in the principal case), or more precisely under the hypothesis 
$\Re a(t;v,v)+\lambda\|v\|^2\ge \alpha\|v\|^2$. 
\linebreak[4]
The present paper gives an affirmative
answer for the $t$-independent case of their problem.
\end{remark}

\end{document}